\definecolor{darkblue}{rgb}{0,0,0.4}
\newtheorem{thm}{Theorem}[section]
\newtheorem{cor}[thm]{Corollary}
\newtheorem{lem}[thm]{Lemma}
\theoremstyle{remark}
\newcommand{\N}{\mathbb N}
\begin{document}
\title{Steklov eigenvalues of nearly spherical domains}

\author{Robert Viator}
\address{Swathmore College, Swathmore, PA}
\email{rviator1@swarthmore.edu}

\author{Braxton Osting}
\address{Department of Mathematics, University of Utah, Salt Lake City, UT}
\email{osting@math.utah.edu}
\thanks{B. Osting acknowledges partial support from NSF DMS 17-52202}

\subjclass[2010]{
35C20, 
35P05, 
41A58} 

\keywords{Steklov eigenvalues, perturbation theory, isoperimetric inequality}

\date{\today}

\begin{abstract} 
We consider Steklov eigenvalues of three-dimensional, nearly-spherical domains. In previous work, we have shown that the Steklov eigenvalues are analytic functions of the domain perturbation parameter. Here, we compute the first-order term of the asymptotic expansion, which can explicitly be written in terms of the Wigner 3-$j$ symbols. We analyze the asymptotic expansion and prove the isoperimetric result that, if $\ell$ is a square integer, the volume-normalized $\ell$-th Steklov eigenvalue is stationary for a ball. 
\end{abstract}

\maketitle

\section{Introduction} 
Let $\Omega \subset \mathbb R^d$ and consider the Steklov eigenproblem on $\Omega$,
\begin{subequations} \label{e:Steklov}
\begin{align}
\label{e:Steklova}
\Delta u &= 0  && \textrm{in } \Omega \\ 
\label{e:Steklovb}
\partial_n u &= \lambda u && \textrm{on } \partial \Omega.
\end{align}
\end{subequations}
Here $\Delta$ is the Laplacian acting on $H^1 (\Omega)$, and $\partial_n$ denotes the outward normal derivative on the boundary, $\partial \Omega$.  It is a well-known fact that, when $\partial \Omega$ is smooth, the Steklov spectrum is discrete, and the eigenvalues can be enumerated in increasing order, $0 = \lambda_0 (\Omega) < \lambda_1(\Omega) \leq \lambda_2(\Omega) \dots$, where $\lambda_n(\Omega) \rightarrow \infty$ as $n\rightarrow \infty$.  For a more general description of the Steklov spectrum, see \cite{girouard2014spectral}.

Isoperimetric inequalities for non-trivial Steklov eigenvalues have been explored since the mid-twentieth century.  
The first major result in the shape-optimization of Steklov eigenvalues were obtained by R. Weinstock \cite{weinstock1954inequalities}, which showed that the disc is the shape in $\mathbb{R}^2$ with largest first non-zero Steklov eigenvalue among all smooth bounded domains of fixed area.  This result was extended to $\mathbb{R}^d$ for $d\geq 3$ by F. Brock \cite{brock2001isoperimetric}.  
Bogosel, Bucur, and Giacomini \cite{bogosel2017optimal} obtained general existence results for shape optimizers for general Steklov eigenvalues $\lambda_n(\Omega), n\geq2$.  
The Steklov eigenvalue maximization problem for fixed perimeter has been studied numerically in two dimensions \cite{Akhmetgaliyev2016}
and three and four dimensions \cite{Antunes2021}. 
Tuning of mixed Steklov-Neumann boundary conditions have also been recently studied by Ammari, Imeri, and Nigam \cite{nigam2020}, where an algorithm was designed to generate the proper mixed boundary conditions necessary to obtain desired resonance effects.

Steklov eigenvalues have applications in electromagnetism and materials design \cite{Lipton_1998-Archive}, \cite{Lipton_1998}.  Recently they have been used in a nondestructive testing method to locate defects in a medium using measured far-field data \cite{Cakoni_2016}.  For this problem, numerical results reveal that a localized defect of the refractive index in a disc perturbs only a small number of Steklov eigenvalues. 

\subsection*{Results}
Let $\Omega = \Omega_\varepsilon$ be a nearly-spherical domain where the boundary can be expressed in spherical coordinates (radius $r$, inclination $\theta\in[0,\pi]$, azimuth $\phi\in[0,2\pi]$) and expanded in the basis of real spherical harmonics, 
\begin{equation} \label{e:dom}
\Omega_\varepsilon=\{(r,\theta,\phi)\colon  0\le r\le 1 + \varepsilon \rho(\theta,\phi)\}, 
\qquad\text{where} \ \ \rho(\theta,\phi)=\sum_{\ell=0}^{\infty}\sum_{m = -\ell}^\ell A_{\ell,m} Y_{\ell,m}(\theta,\phi)
\end{equation}
is a given $C^1(\partial \Omega_0)$ \emph{perturbation function}. 

For $\varepsilon = 0$, $\Omega_0$ is the unit ball and the eigenvalues are $\lambda_{\ell,m} = \ell$ (multiplicity $2\ell +1$) with corresponding eigenfunctions 
$$
u_{\ell,m} (r,\theta,\phi) = r^\ell Y_{\ell,m}(\theta,\phi), \qquad \qquad \ell \in \mathbb N, \ |m|\leq \ell.
$$ 

In previous work \cite{Viator2019}, we have shown that
$\lambda = \lambda(\varepsilon)$ 
is analytic with respect to $\varepsilon$.
The method of proof is to treat such domains as perturbations of the ball, we prove the analyticity of the Dirichlet-to-Neumann operator  with respect to the domain perturbation parameter. Consequently, the Steklov eigenvalues are also shown to be analytic in the domain perturbation parameter \cite{Kato_1966}. 

The \emph{goal of this paper} is to obtain and study the first term of the asymptotic expansion of 
$\lambda = \lambda(\varepsilon)$  
in terms of the small parameter, $\varepsilon$. 
This extends the work in \cite{Viator_2018}, where the same problem is studied in dimension two for reflection-symmetric domains.  We will then use the asymptotic expansion to obtain local optimizers for isoperimetric inequalities for certain Steklov eigenvalues.

In particular, in Section~\ref{s:EigPert}, we derive an asymptotic expansion for Steklov eigenvalues satisfying \eqref{e:Steklov} for a domain $\Omega_\varepsilon$ of the form \eqref{e:dom} for small perturbation parameter $\varepsilon >0$. 
For $k\in \mathbb N$, consider the group of eigenvalues $\{\lambda_n(\varepsilon)\}_ {k^2}^{(k+1)^2-1}$, which satisfy $\lambda_n(0) = k$. 
In Theorem~\ref{t:StekAsym}, we characterize the first-order behavior in $\varepsilon$, {\it i.e.}, find the first term in the expansion  
$$
\lambda_{n}(\varepsilon) = k + \varepsilon \lambda_{n}^{(1)} + O (\varepsilon^2).
$$
We show that the perturbation of these $2k+1$ eigenvalues $\lambda_{n}(\varepsilon)$ are described by the eigenvalues of a real, symmetric $2k+1 \times 2k+1$ matrix, denoted $M^{(k)}$, whose entries are given by
$$
M^{(k)}_{m,n} = - \frac{1}{2} \sum_{p=0}^{\infty}\sum_{q = -p}^p A_{p,q}  \left( p(p+1) + 2k \right)  \iint \limits_{S^2} Y_{p,q}(\theta,\phi) Y_{k,m}(\theta,\phi) Y_{k,n}(\theta,\phi)  \ dS . 
$$
Interestingly, due to the non-simplicity of the eigenvalues, the eigenvalues are not Fr\'echet differentiable at $\varepsilon = 0$. This is manifested in the fact that the first-order eigenvalue perturbation is not described in terms of a linear functional, but rather the eigenvalues of a finite matrix. This should be contrasted with the reflection-symmetric two-dimensional case, where the symmetry can be used to decompose the eigenspaces into dimension one subspaces and the first-order perturbation can be written as a linear functional of the perturbation coefficients \cite{Viator_2018}. 

Further interpretation of the asymptotic results (Theorem~\ref{t:StekAsym}) are given in Corollary~\ref{c:OneCoef} in the case where only one spherical harmonic is perturbed. In particular, we show that high frequency oscillations in the domain not perturb low eigenvalues.  This result is consistent with the behavior seen in \cite{Cakoni_2016}, although the eigenvalue problem and the nature of the perturbation they consider is different; the authors consider a \emph{material} perturbation, while we consider shape deformation.

In Section~\ref{s:stationary}, we further analyze the matrix $M^{(k)}$ to prove the following isoperimetric result. Denote the volume-normalized Steklov eigenvalue by $\Lambda_\ell(\Omega ) := \lambda_\ell(\Omega) \cdot |\Omega|^{\frac 1 3}$. 
\begin{thm} \label{t:stationary}
Let $k \in \mathbb N$. Then $\Lambda_{k^2}$ is stationary for a ball in the sense that, for every perturbation function $\rho$, the map $\varepsilon \mapsto \Lambda_{k^2}(\Omega_\varepsilon)$ is non-increasing in $|\varepsilon|$ for $|\varepsilon|$ sufficiently small. 
\end{thm}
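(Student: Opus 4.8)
The plan is to reduce the claim to a statement about the two one-sided derivatives of $\varepsilon \mapsto \Lambda_{k^2}(\Omega_\varepsilon)$ at $\varepsilon = 0$, and then to verify the required signs by a single trace computation for $M^{(k)}$. Write $V_0 = |\Omega_0| = \tfrac{4\pi}{3}$ for the volume of the unit ball, and let $\mu_{\min} \le \cdots \le \mu_{\max}$ denote the real eigenvalues of the symmetric matrix $M^{(k)}$, which by Theorem~\ref{t:StekAsym} are exactly the first-order coefficients of the $2k+1$ Steklov eigenvalues perturbing off $\lambda = k$. Since $\lambda_{k^2}(\varepsilon)$ is the smallest eigenvalue in this group, its one-sided expansions are $\lambda_{k^2}(\varepsilon) = k + \varepsilon\mu_{\min} + O(\varepsilon^2)$ as $\varepsilon \downarrow 0$ and $\lambda_{k^2}(\varepsilon) = k + \varepsilon\mu_{\max} + O(\varepsilon^2)$ as $\varepsilon\uparrow 0$ (for $\varepsilon<0$ the smallest perturbed eigenvalue is produced by the largest $\mu$).

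First I would expand the volume. In spherical coordinates $|\Omega_\varepsilon| = \tfrac13\iint_{S^2}(1+\varepsilon\rho)^3\, dS$, so using $\iint_{S^2}Y_{p,q}\, dS = \sqrt{4\pi}\,\delta_{p0}\delta_{q0}$ together with orthonormality of the $Y_{p,q}$ gives $|\Omega_\varepsilon| = V_0 + \varepsilon\sqrt{4\pi}\,A_{0,0} + O(\varepsilon^2)$, whence $|\Omega_\varepsilon|^{1/3} = V_0^{1/3}\big(1 + \varepsilon c + O(\varepsilon^2)\big)$ with $c := A_{0,0}/\sqrt{4\pi}$. Multiplying the two expansions gives, for $\varepsilon \downarrow 0$, $\Lambda_{k^2}(\Omega_\varepsilon) = V_0^{1/3}\big(k + \varepsilon(\mu_{\min} + kc)\big) + O(\varepsilon^2)$, and for $\varepsilon \uparrow 0$ the same with $\mu_{\min}$ replaced by $\mu_{\max}$. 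Consequently $\varepsilon\mapsto\Lambda_{k^2}(\Omega_\varepsilon)$ is non-increasing in $|\varepsilon|$ for small $|\varepsilon|$ provided $\mu_{\min} + kc \le 0 \le \mu_{\max} + kc$, i.e., provided $-kc$ lies in the interval $[\mu_{\min},\mu_{\max}]$.

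The crux is therefore to show $-kc \in [\mu_{\min}, \mu_{\max}]$, which I would obtain by proving that $-kc$ equals the mean of the eigenvalues of $M^{(k)}$; since for any real symmetric matrix the spectral mean lies between the smallest and largest eigenvalue, this suffices. Concretely, I would compute $\operatorname{tr} M^{(k)} = \sum_{m=-k}^k M^{(k)}_{m,m}$. The diagonal sum produces $\sum_{m=-k}^k Y_{k,m}(\theta,\phi)^2$, which by the addition theorem for spherical harmonics equals the constant $\tfrac{2k+1}{4\pi}$; pulling this out of the surface integral and again using $\iint_{S^2}Y_{p,q}\, dS = \sqrt{4\pi}\,\delta_{p0}\delta_{q0}$ collapses the $(p,q)$-sum to the single term $p=q=0$, where $p(p+1)+2k = 2k$. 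This yields $\operatorname{tr} M^{(k)} = -(2k+1)\,kc$, so the mean eigenvalue is exactly $-kc$, which completes the argument.

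I expect the main obstacle to be the bookkeeping in two spots: (i) recognizing that the ordered eigenvalue $\lambda_{k^2}$ selects $\mu_{\min}$ for $\varepsilon > 0$ but $\mu_{\max}$ for $\varepsilon < 0$, so the two one-sided slopes are governed by opposite extremal eigenvalues of $M^{(k)}$; and (ii) the observation that the volume normalization is tuned precisely so that its first-order coefficient matches the spectral mean of $M^{(k)}$, which is exactly what forces $-kc$ into $[\mu_{\min},\mu_{\max}]$. The analytic work (the trace evaluation) is short once the addition theorem $\sum_m Y_{k,m}^2 \equiv \tfrac{2k+1}{4\pi}$ is invoked. The borderline case where a one-sided slope vanishes would, if one insists on genuine monotonicity rather than a first-order statement, require inspecting the $O(\varepsilon^2)$ term, but the stationarity conclusion as stated follows directly from the sign conditions above.
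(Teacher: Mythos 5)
Your argument is correct, and at its core it rests on the same idea as the paper's proof --- a trace computation for $M^{(k)}$ --- but your execution differs in two substantive ways, both to your advantage. First, the paper evaluates $\operatorname{tr} M^{(k)}$ term by term from the case analysis of Lemma~\ref{l:Wigner} and concludes $\operatorname{tr} M^{(k)}=0$; the Wigner identity invoked there for the $q=0$ contribution, namely that $\sum_{m=-k}^{k}(-1)^m\left(\begin{smallmatrix}p&k&k\\0&m&-m\end{smallmatrix}\right)$ vanishes, holds only for $p\geq 1$ and fails for $p=0$. The correct value is the one your addition-theorem computation produces, $\operatorname{tr}M^{(k)}=-(2k+1)kA_{0,0}/\sqrt{4\pi}$, which is consistent with Corollary~\ref{c:OneCoef}(3) (a vanishing trace would contradict that corollary whenever $A_{0,0}\neq 0$); your route via $\sum_m Y_{k,m}^2\equiv\frac{2k+1}{4\pi}$ is also considerably shorter than re-deploying the six cases of Lemma~\ref{l:Wigner}. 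Second, the paper's proof stops at $\lambda^{(1)}_{k^2}\leq 0$ and never accounts for the factor $|\Omega_\varepsilon|^{1/3}$ in $\Lambda_{k^2}$ or for the side $\varepsilon<0$; in effect it proves the theorem only under the implicit normalization $A_{0,0}=0$ (where one could appeal to scale invariance, cf.\ Corollary~\ref{l:LocHom}, though the paper does not say so). Your observation that the first-order volume coefficient $kc$, with $c=A_{0,0}/\sqrt{4\pi}$, exactly offsets the spectral mean $-kc$ of $M^{(k)}$, together with the correct identification of $\mu_{\min}$ as the relevant slope for $\varepsilon\downarrow 0$ and $\mu_{\max}$ for $\varepsilon\uparrow 0$, is precisely what is needed to prove the statement about the normalized eigenvalue for arbitrary $\rho$. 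The one caveat, which you already flag and which applies equally to the paper's proof, is that when a one-sided slope $\mu_{\min}+kc$ or $\mu_{\max}+kc$ vanishes, the conclusion at order $\varepsilon$ is only that $\Lambda_{k^2}$ is non-increasing up to $O(\varepsilon^2)$ corrections; genuine monotonicity in that degenerate case would require the second-order term.
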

Theorem~\ref{t:stationary} suggests that for $\ell$ a squared integer, $\Lambda_\ell(\Omega)$ is possibly maximized when $\Omega$ is a ball. However, recent numerical results suggest that $\Lambda_4$ is maximized by the ball while $\Lambda_9$ is not maximized by the ball \cite{Antunes2021}. 

From the eigenvalues of the ball, $\Omega_0$, one can easily see that $\sum_{n=-k}^k \lambda_{k,n}(\Omega_0) = (2k+1)k$. It follows from the proof of Theorem~\ref{t:stationary} that 
\begin{equation} \label{e:partialTrace}
\sum_{\ell = k^2}^{(k+1)^2 - 1} \lambda_\ell(\Omega_\varepsilon) = (2k+1)k + O(\varepsilon^2), 
\end{equation}
meaning that the sum of a grouping of eigenvalues is invariant to perturbation at first-order.  

\begin{figure}[ht!]
\begin{center}
\includegraphics[width=.40\textwidth]{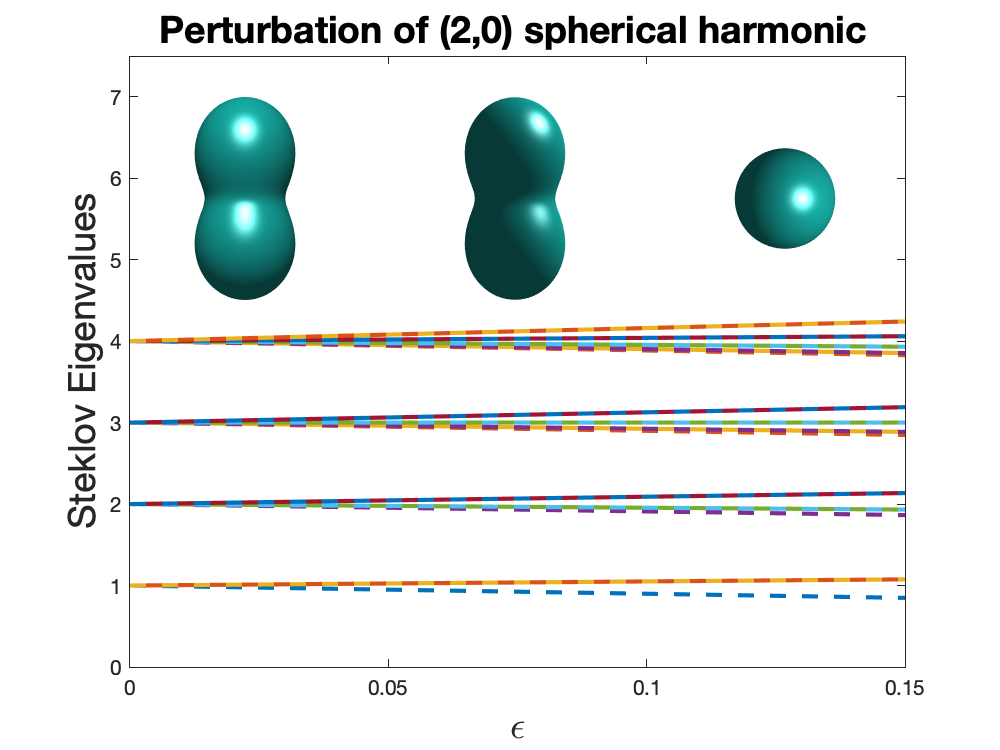}
\includegraphics[width=.40\textwidth]{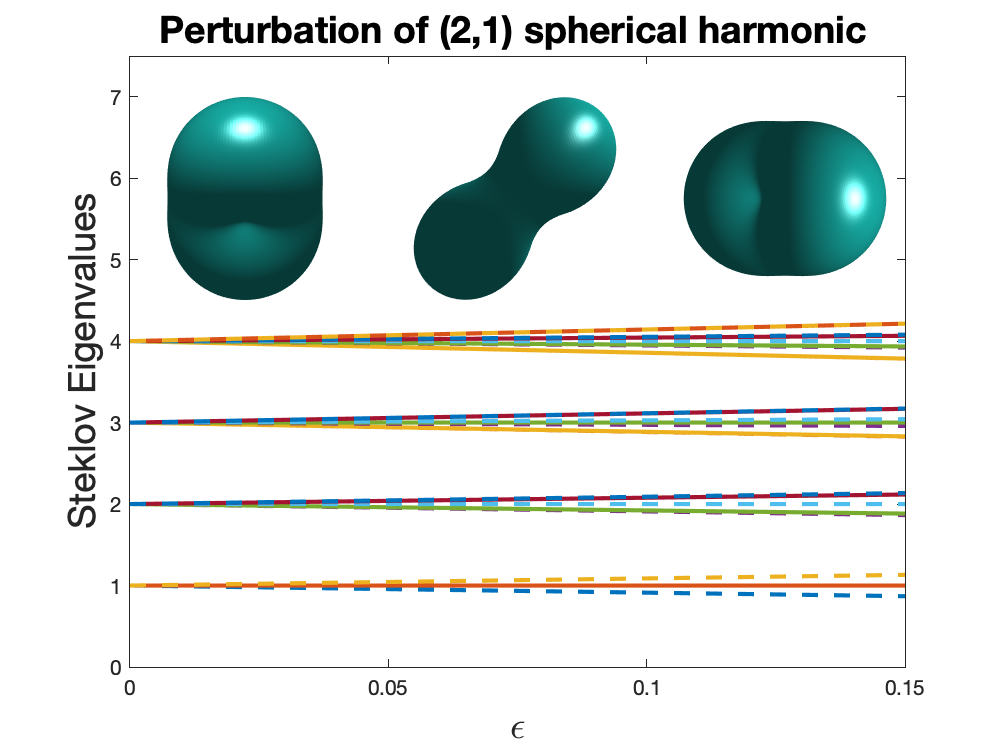}
\includegraphics[width=.40\textwidth]{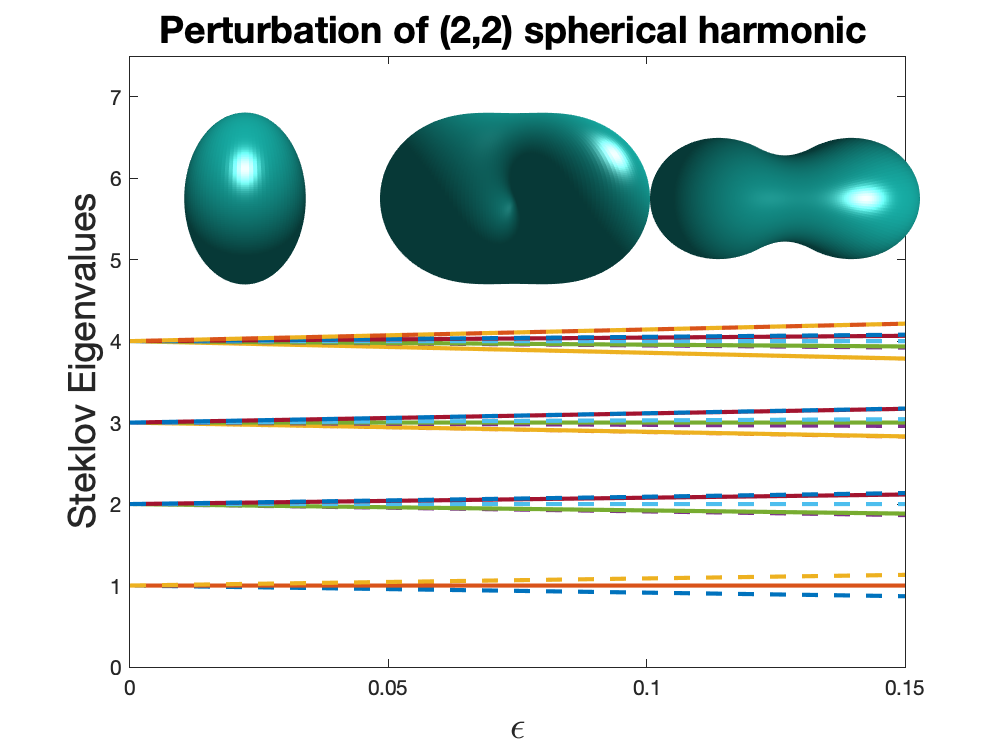}
\includegraphics[width=.40\textwidth]{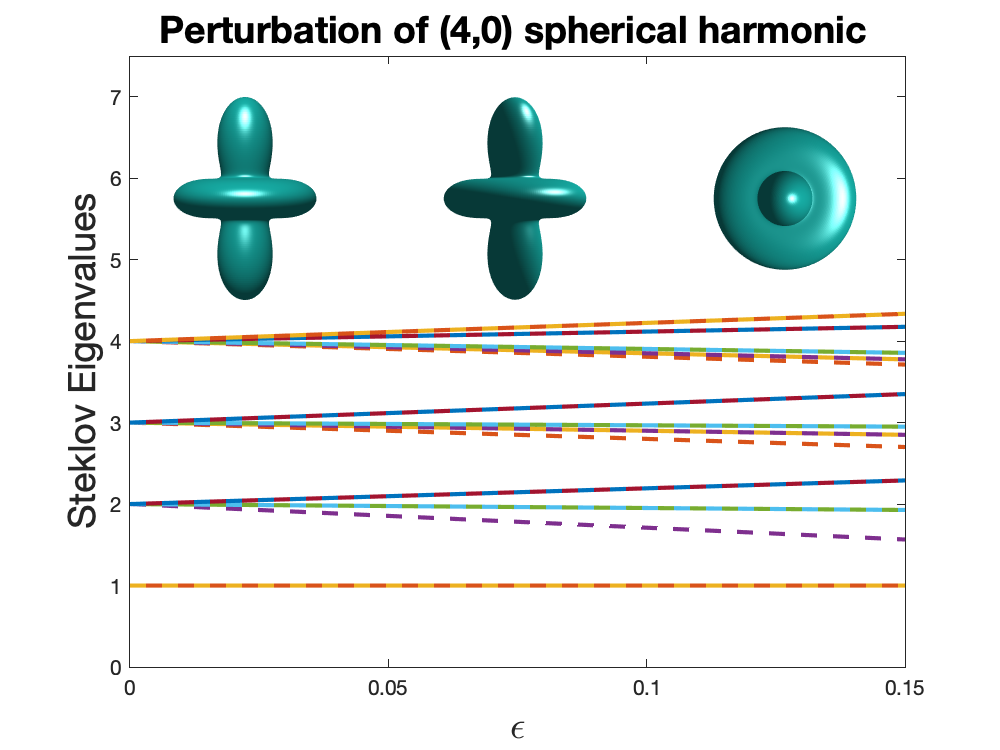}
\includegraphics[width=.40\textwidth]{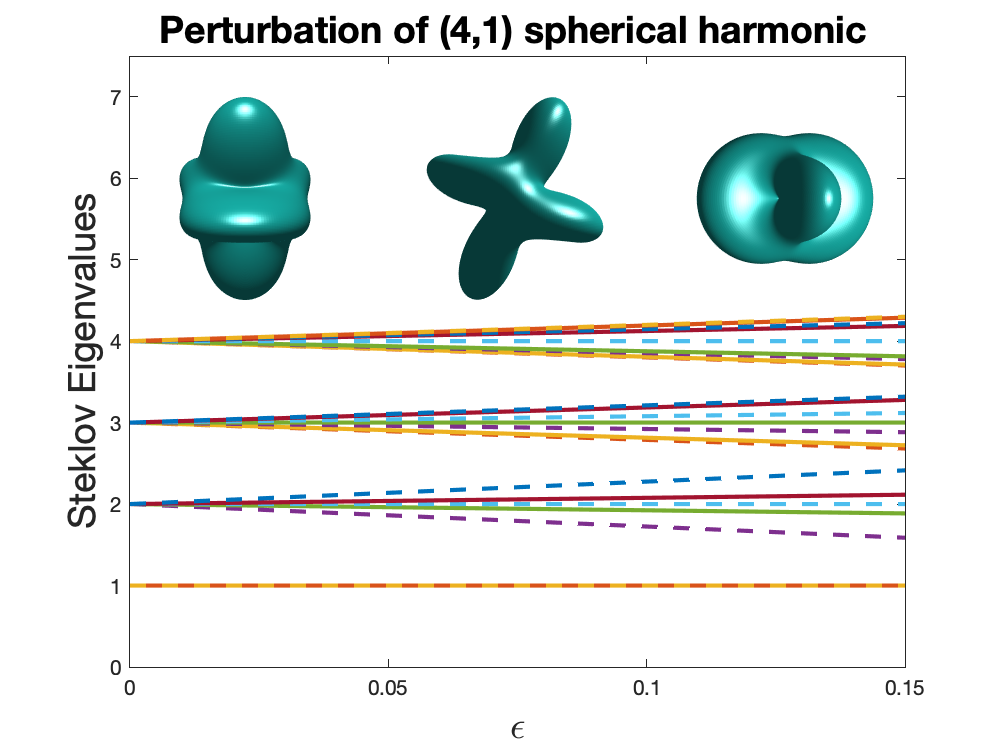}
\includegraphics[width=.40\textwidth]{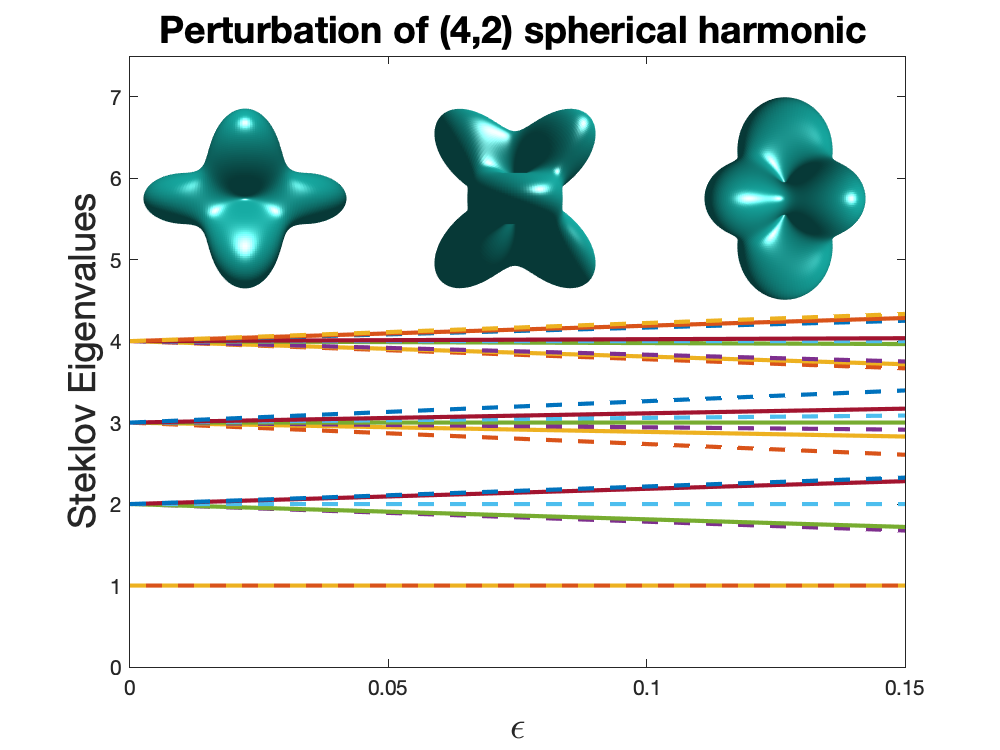}
\includegraphics[width=.40\textwidth]{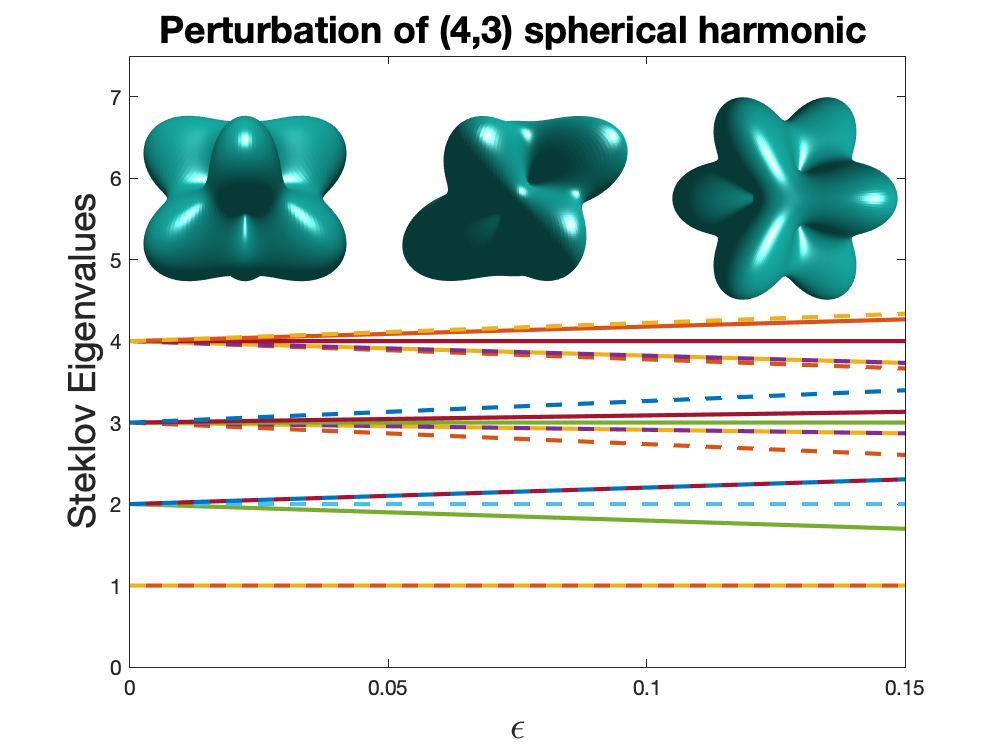}
\includegraphics[width=.40\textwidth]{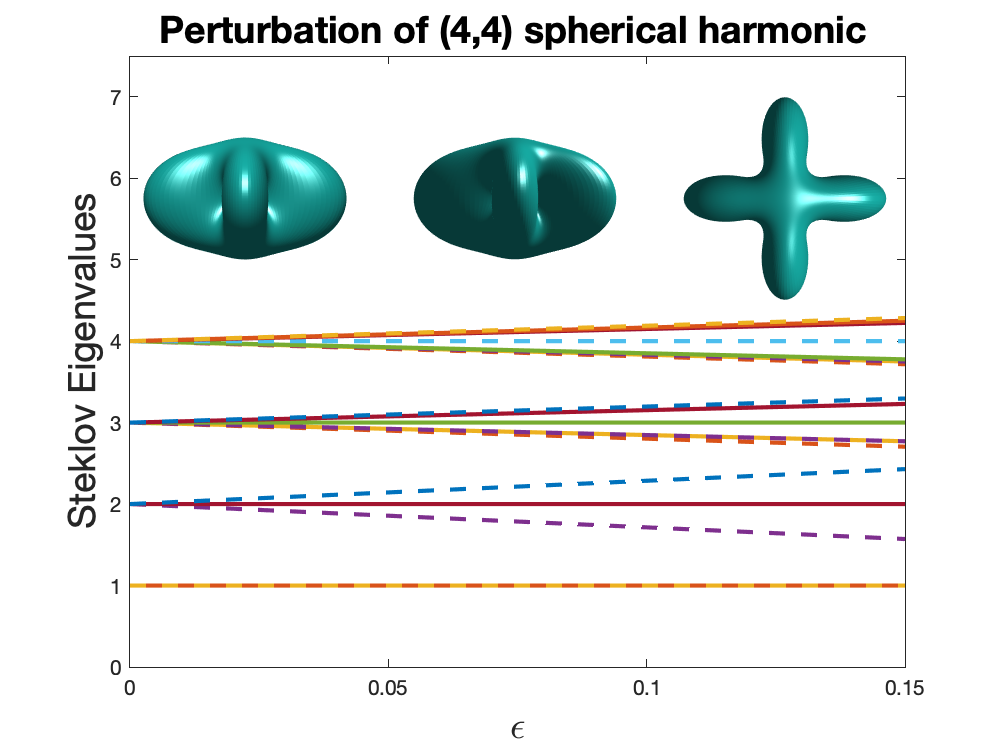}
\caption{A plot of the first-order approximation for Steklov eigenvalues satisfying \eqref{e:Steklov} on a domain of the form \eqref{e:dom} with $\rho(\theta,\phi) = 1 + \varepsilon Y_{p,q}(\theta,\phi)$ for indicated $(p,q)$ spherical harmonic. See Section~\ref{s:AsymSteklov}.}
\label{f:PerturbBall}
\end{center}
\end{figure}

Our main results are illustrated in Figure~\ref{f:PerturbBall}. Here, for various choices of $(p,q)$, we show the  first-order approximation for Steklov eigenvalues for a domain of the form \eqref{e:dom} with $\rho(\theta,\phi) = 1 + \varepsilon Y_{p,q}(\theta,\phi)$. We observe that, although we have \eqref{e:partialTrace} is satisfied, this is not always due to the direct cancellation of pairs of eigenvalues. For example, for $(p,q)=(2,0)$, the first eigenvalue group splits into three eigenvalues: a multiplicity two eigenvalue which is positively perturbed and a simple  eigenvalue which is negatively perturbed. The magnitudes of the perturbations are such that the sum is zero.


\section{An Asymptotic expansion for Steklov eigenvalues of nearly-spherical domains} \label{s:EigPert}
In this section, we derive an asymptotic expansion for Steklov eigenvalues satisfying \eqref{e:Steklov} for a domain $\Omega_\varepsilon$ of the form \eqref{e:dom} for small perturbation parameter $\varepsilon >0$. We recall that the real spherical harmonics, $Y_{\ell,m}$, in \eqref{e:dom} can be obtained from the complex spherical harmonics as follows. Define the \emph{complex spherical harmonic} by 
 \begin{equation} \label{e:ComplexHarmonic}
 Y_{\ell}^{m}(\theta,\phi)  =  \sqrt{\frac{(2\ell+1)}{4\pi}\frac{(\ell-m)!}{(\ell+m)!}}P_{\ell}^{m}(\cos(\theta))e^{im\phi}, 
 \qquad \qquad 
 \ell \geq 0, \ \  |m| \leq \ell,  
\end{equation}
where $P_{\ell}^{m}$ is the \emph{associated Legendre polynomial}, which can be defined through the Rodrigues formula, 
$P_{\ell}^{m}(x) = \frac{(-1)^{m}}{2^{\ell}\ell!}(1-x^{2})^{\frac{m}{2}}\frac{d^{m+\ell}}{dx^{m+\ell}}(x^{2}-1)^{\ell}$.
For $\ell \geq 0$ and $|m| \leq \ell$, the  \emph{real spherical harmonics} are then defined by
\begin{align}
\label{e:RealHarmonicsa}
Y_{\ell,m}(\theta,\phi) &=
\begin{cases}
	\frac{i}{\sqrt{2}} \left[ Y_{\ell}^m(\theta,\phi) - (-1)^mY_{\ell}^{-m}(\theta,\phi) \right] & \text{if}\ m<0 \\
	Y_{\ell}^0(\theta,\phi) & \text{if}\ m=0  \\
	\frac{1}{\sqrt{2}} \left[ Y_{\ell}^{-m}(\theta,\phi) + (-1)^mY_{\ell}^{m}(\theta,\phi) \right] & \text{if}\ m>0 
\end{cases} 
\end{align}

We begin by deriving asymptotic expansions for geometric quantities associated with $\Omega_\varepsilon$, in particular the volume of $\Omega_\varepsilon$ and the outward normal vector to $\partial \Omega_\varepsilon$. 

\subsection{Asymptotic expansions for geometric quantities}

\subsubsection{Volume}
Denoting the measure $dS = \sin \theta d\phi d\theta$, the volume of $\Omega_\varepsilon$ can easily be computed as 
\begin{align*}
|\Omega_\varepsilon| &= \frac{1}{3} \iint_{S^2} (1 + \varepsilon \rho(\theta,\phi))^3 \  dS \\
&= \frac{4 \pi}{3} + \varepsilon  \iint_{S^2}  \rho(\theta,\phi) \ dS + O (\varepsilon^2) \\
&= \frac{4 \pi}{3} + \varepsilon A_{0,0} \iint_{S^2}  Y_{0,0}(\theta,\phi)   \ dS + O (\varepsilon^2) \\
&= \frac{4 \pi}{3} + \varepsilon \sqrt{4 \pi} A_{0,0}  + O (\varepsilon^2) 
\end{align*}

\subsubsection{Normal vector}
It is convenient to denote the spherical coordinate vectors
$$
\hat r = \begin{pmatrix}
\sin(\theta) \cos (\phi) \\ \sin(\theta) \sin(\phi) \\ \cos(\theta)
\end{pmatrix}, 
\qquad
\hat \theta = \begin{pmatrix}
\cos(\theta) \cos (\phi) \\ \cos(\theta) \sin(\phi) \\ -\sin(\theta)
\end{pmatrix}, 
\quad \textrm{and} \quad
\hat \phi = \begin{pmatrix}
- \sin(\phi) \\ \cos(\phi) \\ 0
\end{pmatrix}. 
$$
The boundary can then be expressed as $x(\theta,\phi) = \left(1 + \varepsilon \rho(\theta,\phi)\right) \hat r$. 
The outward unit normal to the boundary of the domain can be computed by
$\hat{n}_\varepsilon(\theta,\phi) = \frac{ x_\theta \times x_\phi}{ |x_\theta \times x_\phi|}$. We compute
\begin{align*}
x_\theta &= \varepsilon \rho_\theta \hat r + \left(1 + \varepsilon \rho \right) \hat \theta \\
x_\phi & = \varepsilon \rho_\phi \hat r + \left(1 + \varepsilon \rho \right) \sin (\theta) \hat \phi.
\end{align*} 
Using the relationships 
$\hat \phi \times \hat r = \hat \theta$, 
$\hat \theta \times \hat \phi = \hat r$, and 
$\hat r \times \hat \theta = \hat \phi$, 
we obtain a (non-normalized) vector that is outward normal to the boundary, 
\begin{align*}
\tilde{n}_\varepsilon(\theta,\phi) &= x_\theta \times x_\phi \\
&= (1+ \varepsilon \rho) \left( (1 + \varepsilon \rho) \sin \theta \hat r - \varepsilon \rho_\theta \sin \theta \hat \theta - \varepsilon \rho_\phi \hat \phi \right) \\
&= \left( \left(1 + 2 \varepsilon \rho \right) \hat r - \varepsilon \rho_\theta \hat \theta \right) \sin (\theta) - \varepsilon \rho_\phi \hat \phi + O(\varepsilon^2) . 
\end{align*}
We compute
$$
|\tilde{n}_\varepsilon(\theta,\phi) |^{-1} = \frac{1}{\sin(\theta)} \left( 1 - 2 \varepsilon \rho \right) + O(\varepsilon^2).  
$$
The unit-normalized outward normal vector is then 
\begin{align}
\label{normalizednormal}
\hat{n}_\varepsilon =  | \tilde{n}_\varepsilon |^{-1} \tilde{n}_\varepsilon 
=  \vec{n}_0 + \varepsilon \vec{n}_1 +  O(\varepsilon^2), 
\end{align}
where
\begin{subequations}
\label{normalterms}
\begin{align}
\vec{n}_0 & =  \hat{r}\\
\vec{n}_1 & =  - \left( \rho_\theta \hat \theta + \frac{\rho_\phi}{ \sin(\theta) } \hat \phi \right) 
\end{align}
\end{subequations}

\subsection{Perturbation of eigenvalues}
We consider the perturbation of a Steklov eigenpair, 
\begin{align*}
& \lambda^\varepsilon =  \lambda^{(0)} + \varepsilon \lambda^{(1)} + O(\varepsilon^2)\\
& u^\varepsilon (r,\theta,\phi)  =  u^{(0)} (r,\theta,\phi)  + \varepsilon u^{(1)} (r,\theta,\phi)  + O(\varepsilon^2), 
\end{align*}
 due to a perturbation in the domain of the form in \eqref{e:dom}. 
 For fixed $k \in \mathbb N$, we let 
 \begin{subequations} \label{e:ZeroOrder}
 \begin{align} \label{e:ZeroOrdera}
 & \lambda_k^{(0)} = k\\
&  u_k^{(0)} (r,\theta,\phi) = \sum_{m=-k}^k  \alpha_m r^{k} Y_{k,m}(\theta, \phi). 
\end{align}
\end{subequations}
Note that we can't apriori determine the coefficients $\alpha_m$ that will select  the $O(1)$ eigenfunction  from the $2k+1$ dimensional eigenspace. 
Since the family $\{r^k Y_{k,n}(\theta, \phi)  \}_{k \in \mathbb{N}, \ |n|\leq k}$ forms a complete orthonormal basis for $L^2 (\Omega_\varepsilon)$  with $\varepsilon = 0$, we expand the higher-order eigenfunction perturbations in this basis. Thus, we make the following perturbation ansatz in $\varepsilon$ for the eigenvalue $\lambda_k^\varepsilon$ and corresponding eigenfunction  $u_k^\varepsilon$ 
\begin{subequations}
\label{e:expansionS}
\begin{align}
\lambda_k^\varepsilon & =  k + \varepsilon \lambda^{(1)} + O(\varepsilon^2) \\
u_k^\varepsilon(r,\theta,\phi) & = \sum_{\ell=0}^{\infty}\sum_{m = -\ell}^\ell  \Big( \delta_{\ell,k} \alpha_{m} + \varepsilon \beta_{\ell, m} + O(\varepsilon^2)  \Big) r^\ell Y_{\ell,m}(\theta, \phi). 
\end{align}
\end{subequations}
This ansatz satisfies \eqref{e:Steklova} exactly and we will determine the eigenvalue perturbation $\lambda^{(1)}$ and the coefficients $\alpha_m$ and $\beta_{\ell,m}$ so that \eqref{e:Steklovb} is satisfied.
Using the identity $\nabla = \partial_r \hat r + r^{-1} \partial_\theta \hat \theta + \frac{1}{r \sin(\theta)} \partial_\phi  \hat\phi$, we have that 
\begin{equation} \label{e:vj}
\nabla u_k^\varepsilon =  \sum_{\ell=0}^{\infty}\sum_{m = -\ell}^\ell   \Big( \delta_{\ell,k} \alpha_{m} + \varepsilon \beta_{\ell, m} + O(\varepsilon^2)  \Big)  r^{\ell-1} \vec{v}_{\ell,m}
\end{equation} 
where
$$
\vec{v}_{\ell,m} = \ell Y_{\ell,m}(\theta,\phi) \hat r + \partial_\theta Y_{\ell,m}(\theta,\phi) \hat \theta + \frac{1}{\sin(\theta) } \partial_\phi Y_{\ell,m}(\theta,\phi) \hat \phi.
$$
Denoting the expansion of the normal vector by 
$ \hat{n}_\varepsilon =  \vec{n}_0 + \varepsilon \vec{n}_1 + O(\varepsilon^2)$ as in \eqref{normalizednormal},  
we have the left hand side (LHS) and right hand side (RHS) of \eqref{e:Steklovb} are given by
\begin{subequations} \label{expHS}
\begin{align}\label{expLHS}
\textrm{LHS}  =    \sum_{\ell \in \mathbb{N}, \ |m|\leq \ell}  & \    
\left(  \delta_{\ell,k}  \alpha_{m} + \varepsilon \beta_{\ell,m} + O(\varepsilon^2) \right)   
\left((1 + \varepsilon (\ell-1) \rho + O(\varepsilon^2) \right) \ \left(\vec{n}_0 + \varepsilon \vec{n}_1 +O(\varepsilon^2) \right)\cdot  \vec{v}_{\ell,m}  \\
\label{expRHS}
\textrm{RHS}  =  \sum \limits_{\ell \in \mathbb{N}, \ |m|\leq \ell} &  \ 
\left(\lambda_k^{(0)} + \varepsilon \lambda_k^{(1)} + O(\varepsilon^2) \right)   
\left( \delta_{\ell,k}  \alpha_{m} + \varepsilon \beta_{\ell,m} + O(\varepsilon^2)  \right)  \ 
 \left(1 + \varepsilon \ell \rho + O(\varepsilon^2) \right) Y_{\ell,m}(\theta,\phi). 
\end{align}
\end{subequations}

Equating $O(\varepsilon^0)$  terms in \eqref{expLHS} and \eqref{expRHS}, we recover \eqref{e:ZeroOrdera}. 
Equating $O(\varepsilon^1)$  terms in \eqref{expLHS} and \eqref{expRHS}, we obtain 
\begin{align} \label{e:FirstO1eq}
\lambda_{k}^{(1)} \sum_{m=-k}^k \alpha_{m} Y_{k,m}(\theta, \phi) &= 
-  \sum_{m=-k}^k \alpha_{m}  \left( k \rho(\theta,\phi)Y_{k,m}(\theta,\phi) 
+ \rho_\theta \partial_\theta Y_{k,m}(\theta,\phi) 
+\frac{\rho_\phi}{ \sin^2(\theta)} \partial_\phi Y_{k,m}(\theta,\phi) \right)  \\
\nonumber
& \quad  - \sum_{\ell \in \mathbb{N}, \ |m|\leq \ell}   (k - \ell) \beta_{\ell,m} Y_{\ell,m}
\end{align}
Next we multiply by both sides of \eqref{e:FirstO1eq} by $Y_{k,n}(\theta, \phi)$ for $|n|\leq k$, integrate over $\Omega_0 = S^2$ with respect to the measure  $dS = \sin(\theta)  d\phi d\theta$, and use the orthogonality of the real spherical harmonics to obtain 
\begin{equation} \label{e:Lam1}
\lambda_{k}^{(1)} \alpha_n = 
 \sum_{m=-k}^k M^{(k)}_{m,n} \alpha_{m} 
 \qquad \implies \qquad 
  M^{(k)} \alpha =  \lambda_{k}^{(1)} \alpha 
\end{equation}
where the matrix $M^{(k)}\in \mathbb R^{ 2k+1 \times 2k+1}$ has entries given by 
$$
M^{(k)}_{m,n}  = 
- \iint  \left( k \rho(\theta,\phi)Y_{k,m}(\theta,\phi)  
+ \rho_\theta \left(\partial_\theta Y_{k,m}(\theta,\phi) \right) 
+\frac{\rho_\phi}{ \sin^2(\theta)} \left( \partial_\phi Y_{k,m}(\theta,\phi) \right) \right) Y_{k,n}(\theta,\phi)  \ dS  
$$
We now simplify $M^{(k)} $ as follows. 
Integrating by parts, the second term in $M^{(k)} $  can be written 
\begin{align*}
- \iint &  \rho_\theta \left(\partial_\theta Y_{k,m}(\theta,\phi) \right) Y_{k,n}(\theta,\phi)  \ dS \\
= & - \frac{1}{2} \iint   \rho_\theta \left[  Y_{k,n}(\theta,\phi)  \partial_\theta Y_{k,m}(\theta,\phi) -  Y_{k,m}(\theta,\phi)  \partial_\theta Y_{k,n}(\theta,\phi) \right] \ dS \\
 & +   \frac{1}{2}  \iint  \sin^{-1}(\theta) \partial_\theta \left( \sin(\theta) \rho_\theta \right)  Y_{k,m}(\theta,\phi) Y_{k,n}(\theta,\phi)  \ dS  \\
= & \  \frac{1}{2} \iint   \rho \left[  Y_{k,n}(\theta,\phi)  \partial^2_\theta Y_{k,m}(\theta,\phi) -  Y_{k,m}(\theta,\phi)  \partial^2_\theta Y_{k,n}(\theta,\phi) \right] \ dS \\
 & +   \frac{1}{2}  \iint  \sin^{-1}(\theta) \partial_\theta \left( \sin(\theta) \rho_\theta \right)  Y_{k,m}(\theta,\phi) Y_{k,n}(\theta,\phi)  \ dS   
\end{align*}
Similarly, the third term in $M^{(k)} $  can be written 
\begin{align*}
 - \iint & \frac{\rho_\phi}{ \sin^2(\theta)} \left( \partial_\phi Y_{k,m}(\theta,\phi) \right) Y_{k,n}(\theta,\phi)  \ dS  \\
= & - \frac{1}{2} \iint  \frac{\rho_\phi}{ \sin^2(\theta)} \left[ Y_{k,n}(\theta,\phi)    \partial_\phi Y_{k,m}(\theta,\phi)- Y_{k,m}(\theta,\phi) \partial_\phi Y_{k,n}(\theta,\phi) \right]  \ dS  \\
 & +   \frac{1}{2}  \iint  \frac{\partial_\phi^2 \rho}{ \sin^2(\theta)}  Y_{k,m}(\theta,\phi) Y_{k,n}(\theta,\phi)  \ dS  \\
= & \  \frac{1}{2} \iint  \rho \left[ Y_{k,n}(\theta,\phi)  \frac{ \partial^2_\phi Y_{k,m}(\theta,\phi)}{\sin^2(\theta)}  - Y_{k,m}(\theta,\phi) \frac{ \partial^2_\phi Y_{k,n}(\theta,\phi)}{\sin^2(\theta)}\right]  \ dS  \\
 & +   \frac{1}{2}  \iint  \frac{\partial_\phi^2 \rho}{ \sin^2(\theta)}  Y_{k,m}(\theta,\phi) Y_{k,n}(\theta,\phi)  \ dS  
\end{align*}
Denoting  the spherical Laplacian by
$ \Delta_S u = \frac{1}{\sin (\theta)} \partial_\theta \left( \sin (\theta) \ \partial_\theta u   \right) + \frac{1}{\sin^2 (\theta)} \partial_\phi^2 u$,
we obtain
\begin{align*}
M^{(k)}_{m,n}  = & 
- \frac{1}{2} \iint \left( - \Delta_S \rho + 2 k \rho  \right) Y_{k,m}(\theta,\phi) Y_{k,n}(\theta,\phi)  \ dS  \\
& - \frac{1}{2} \iint \rho \left( Y_{k,n}(\theta,\phi) \Delta_S Y_{k,m}(\theta,\phi)   -Y_{k,m}(\theta,\phi) \Delta_S Y_{k,n}(\theta,\phi) \right)  \ dS
\end{align*}
Using the fact that $-\Delta_S Y_{k,n} = k(k + 1) Y_{k,n}$ and writing 
$\rho(\theta,\phi)=\sum_{p=0}^{\infty}\sum_{q = -p}^p A_{p,q} Y_{p,q}(\theta,\phi)$, 
we have  
\begin{subequations}\label{e:M}
\begin{align} 
M^{(k)}_{m,n}  
&= - \frac{1}{2} \iint \left( -\Delta_S \rho  + 2 k \rho  \right) Y_{k,m}(\theta,\phi) Y_{k,n}(\theta,\phi)  \ dS   \\
&= - \frac{1}{2} \sum_{p=0}^{\infty}\sum_{q = -p}^p A_{p,q}  \iint \left( -\Delta_S Y_{p,q}(\theta,\phi)  + 2 k Y_{p,q}(\theta,\phi)  \right) Y_{k,m}(\theta,\phi) Y_{k,n}(\theta,\phi)  \ dS   \\
&= - \frac{1}{2} \sum_{p=0}^{\infty}\sum_{q = -p}^p A_{p,q}  \left( p(p+1) + 2k \right)  \iint Y_{p,q}(\theta,\phi) Y_{k,m}(\theta,\phi) Y_{k,n}(\theta,\phi)  \ dS   \\
&= - \frac{1}{2} \sum_{p=0}^{\infty}\sum_{q = -p}^p A_{p,q}  \left( p(p+1) + 2k \right)  W^{p,k}_{q,m,n}. 
\end{align}
\end{subequations}
where 
\begin{equation} \label{e:W}
W^{p,k}_{q,m,n} = \iint Y_{p,q}(\theta,\phi) Y_{k,m}(\theta,\phi) Y_{k,n}(\theta,\phi)  \ dS . 
\end{equation}

\subsection{Evaluation of $W^{p,k}_{q,m,n}$}
In \eqref{e:W}, we require the evaluation of $W^{p,k}_{q,m,n}$, the integral of the triple product of real spherical harmonic functions. 
Recall that the product of three \emph{complex} spherical harmonics can be expressed in terms of the Wigner 3-j symbol by
\begin{equation} \label{e:trip}
\iint Y_{\ell_1}^{m_1}Y_{\ell_2}^{m_2}Y_{\ell_3}^{m_3} \,dS  
  =
\sqrt{\frac{(2\ell_1+1)(2\ell_2+1)(2\ell_3+1)}{4\pi}}
\begin{pmatrix}
  \ell_1 & \ell_2 & \ell_3 \\
  0 & 0 & 0
\end{pmatrix}
\begin{pmatrix}
  \ell_1 & \ell_2 &\ell_3\\
  m_1 & m_2 & m_3
\end{pmatrix}.
\end{equation}
The Wigner 3-j symbol, $\begin{pmatrix}
  \ell_1 & \ell_2 &\ell_3\\
  m_1 & m_2 & m_3
\end{pmatrix}$, is zero unless the following \emph{selection rules} are satisfied\footnote{\url{https://dlmf.nist.gov/34.2}}: 
\begin{enumerate}
\item $m_i \in \{-\ell_i, -\ell_i + 1, -\ell_i + 2, \ldots, \ell_i\}, \quad (i = 1, 2, 3)$.
\item $m_1 + m_2 + m_3 = 0$
\item $|\ell_1 - \ell_2| \le \ell_3 \le \ell_1 + \ell_2$ 
\item $(\ell_1 + \ell_2 + \ell_3)$ is an integer (and, moreover, an even integer if  $m_1 = m_2 = m_3 = 0$).
\end{enumerate} 
The Wigner 3-j symbol  also satisfies 
\begin{equation} \label{e:time-reversal}
\begin{pmatrix}
  \ell_1 & \ell_2 & \ell_3\\
  -m_1 & -m_2 & -m_3
\end{pmatrix}
= (-1)^{\ell_1 + \ell_2 + \ell_3}
\begin{pmatrix}
  \ell_1 & \ell_2 & \ell_3\\
  m_1 & m_2 & m_3
\end{pmatrix}.
\end{equation}
The following Lemma gives an expression for $W^{p,k}_{q,m,n}$.

\begin{lem} \label{l:Wigner}
Let $p,k \geq0$ be fixed and let $|q|\leq p$ and $|m|,|n| \leq k$. Write $C_{p,k} = (2k+1) \frac{\sqrt{2p + 1}}{\sqrt{4 \pi}}$. 
Since $W^{p,k}_{q,m,n}$ is symmetric in $m$ and $n$, without loss of generality, assume that $m\geq n$. The integral defining $W^{p,k}_{q,m,n}$ in \eqref{e:W} can be expressed in terms of Wigner 3-j symbols as follows. 

\medskip
\noindent \underline{Case 1: $m > 0$ and $n >0$.}
\begin{equation*}
W^{p,k}_{q,m,n} = 
\begin{cases} 
0 & q < 0 \\
\delta_{m,n} C_{p,k} (-1)^m 
\begin{pmatrix}
p & k & k \\
0 & 0 & 0
\end{pmatrix}
\begin{pmatrix}
p & k & k \\
0 & m & -m
\end{pmatrix}
& q = 0 \\
\left\{ \begin{array}{ll}
\frac{1}{\sqrt{2}} C_{p,k} (-1)^{q}
\begin{pmatrix}
p & k & k \\
0 & 0 & 0
\end{pmatrix}
\begin{pmatrix}
p & k & k \\
-q & m & n
\end{pmatrix}
& \textrm{if } q = m + n \\
\frac{1}{\sqrt{2}} C_{p,k} (-1)^{m}
\begin{pmatrix}
p & k & k \\
0 & 0 & 0
\end{pmatrix}
\begin{pmatrix}
p & k & k \\
q & -m & n
\end{pmatrix}
 & \textrm{if } q = m - n \\
 0 & \textrm{otherwise}
\end{array}  \right\}  & q > 0
\end{cases}
\end{equation*}

\medskip
\noindent \underline{Case 2: $m =n= 0$.}
\begin{equation*}
W^{p,k}_{q,m,n} = 
\begin{cases} 
0 & q < 0 \\
C_{p,k} 
\begin{pmatrix}
p & k & k \\
0 & 0 & 0
\end{pmatrix}^2
& q = 0 \\
0  & q > 0
\end{cases}
\end{equation*}

\medskip
\noindent \underline{Case 3: $m < 0$ and $n < 0$.}
\begin{equation*}
W^{p,k}_{q,m,n} = 
\begin{cases} 
0 & q < 0 \\
\delta_{m,n} C_{p,k} (-1)^m 
\begin{pmatrix}
p & k & k \\
0 & 0 & 0
\end{pmatrix}
\begin{pmatrix}
p & k & k \\
0 & m & -m
\end{pmatrix}
& q = 0 \\
\left\{ \begin{array}{ll}
\frac{1}{\sqrt{2}} C_{p,k} (-1)^{q+1}
\begin{pmatrix}
p & k & k \\
0 & 0 & 0
\end{pmatrix}
\begin{pmatrix}
p & k & k \\
q & m & n
\end{pmatrix}
& \textrm{if } q = -m - n \\
\frac{1}{\sqrt{2}} C_{p,k} (-1)^{n}
\begin{pmatrix}
p & k & k \\
0 & 0 & 0
\end{pmatrix}
\begin{pmatrix}
p & k & k \\
q & -m & n
\end{pmatrix}
 & \textrm{if } q = m - n \\
 0 & \textrm{otherwise}
\end{array}  \right\}  & q > 0
\end{cases}
\end{equation*}

\medskip
\noindent \underline{Case 4: $m > 0$ and $n < 0$.}
\begin{equation*}
W^{p,k}_{q,m,n} = 
\begin{cases} 
\left\{ \begin{array}{ll}
\frac{1}{\sqrt{2}} C_{p,k} (-1)^{n}
\begin{pmatrix}
p & k & k \\
0 & 0 & 0
\end{pmatrix}
\begin{pmatrix}
p & k & k \\
q & -m & -n
\end{pmatrix}
& \textrm{if } q = m + n \\
\frac{1}{\sqrt{2}} C_{p,k} (-1)^{q}
\begin{pmatrix}
p & k & k \\
0 & 0 & 0
\end{pmatrix}
\begin{pmatrix}
p & k & k \\
q & m & -n
\end{pmatrix}
 & \textrm{if } q = n-m \\
\frac{1}{\sqrt{2}} C_{p,k} (-1)^{m+1}
\begin{pmatrix}
p & k & k \\
0 & 0 & 0
\end{pmatrix}
\begin{pmatrix}
p & k & k \\
q & m & n
\end{pmatrix}
 & \textrm{if } q = -n-m \\
 0 & \textrm{otherwise}
\end{array}  \right\} & q < 0 \\
0 & q = 0 \\
0 & q > 0
\end{cases}
\end{equation*}

\medskip
\noindent \underline{Case 5: $m > 0$ and $n = 0$.}
\begin{equation*}
W^{p,k}_{q,m,n} = 
\begin{cases} 
0 & q < 0 \\
0 & q = 0 \\
\delta_{q,m} C_{p,k} (-1)^q 
\begin{pmatrix}
p & k & k \\
0 & 0 & 0
\end{pmatrix}
\begin{pmatrix}
p & k & k \\
m & -m & 0
\end{pmatrix} 
& q > 0
\end{cases}
\end{equation*}

\medskip
\noindent \underline{Case 6: $m = 0$ and $n < 0$.}
\begin{equation*}
W^{p,k}_{q,m,n} = 
\begin{cases} 
\delta_{q,n} C_{p,k} (-1)^q 
\begin{pmatrix}
p & k & k \\
0 & 0 & 0
\end{pmatrix}
\begin{pmatrix}
p & k & k \\
n & -n & 0
\end{pmatrix} 
& q < 0 \\
0 & q = 0 \\
 0 & q > 0
\end{cases}
\end{equation*}
\end{lem}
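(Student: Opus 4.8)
\subsection*{Proof proposal}

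The plan is to reduce the integral of a triple product of \emph{real} spherical harmonics to a finite sum of integrals of triple products of \emph{complex} spherical harmonics, each of which is evaluated by the known formula \eqref{e:trip}. Concretely, I substitute each of the three factors $Y_{p,q}$, $Y_{k,m}$, $Y_{k,n}$ using the definition \eqref{e:RealHarmonicsa}, which expresses every real harmonic as a linear combination of at most two complex harmonics $Y_\ell^{\pm |\cdot|}$ with coefficients drawn from $\{1, \pm\tfrac{1}{\sqrt 2}, \pm\tfrac{i}{\sqrt 2}\}$. Expanding the product then yields at most $2\times 2\times 2 = 8$ complex triple products, and I apply \eqref{e:trip} to each.

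The crucial simplification comes from selection rule (2): a complex triple product $\iint Y_p^{q'} Y_k^{m'} Y_k^{n'}\,dS$ vanishes unless $q' + m' + n' = 0$. Among the eight sign choices $q'\in\{\pm q\}$, $m'\in\{\pm m\}$, $n'\in\{\pm n\}$, only those satisfying this linear constraint survive, and these constraints are exactly the relations $q = \pm(m+n)$ and $q = \pm(m-n)$ that appear in the statement and dictate the sub-cases. In particular this immediately explains why $W^{p,k}_{q,m,n} = 0$ whenever $q$ has none of these forms, and why the sign of $q$ relative to the signs of $m$ and $n$ selects which case is nonzero. I would organize the computation along the six sign patterns of $(m,n)$ compatible with the assumption $m\ge n$ (both positive; both zero; both negative; $m>0>n$; $m>0=n$; $m=0>n$), invoking the symmetry of the integrand in $m,n$ to justify this reduction to $m \ge n$.

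To collapse the surviving (typically two) terms into the single 3-j expression stated, I would use two facts. First, the Gaunt factor $\begin{pmatrix} p & k & k \\ 0 & 0 & 0 \end{pmatrix}$ forces $p + 2k$, and hence $p$, to be even by selection rule (4); consequently the time-reversal relation \eqref{e:time-reversal} contributes the sign $(-1)^{p+2k} = +1$. Second, \eqref{e:time-reversal} then relates the two surviving 3-j symbols, whose lower rows are negatives of one another, so that, depending on the accumulated power of $(-1)$ coming from the coefficients in \eqref{e:RealHarmonicsa} (simplified using relations such as $(-1)^{m+n} = (-1)^q$ when $q = m+n$), the two terms either reinforce — producing the single prefactor $\tfrac{1}{\sqrt 2}C_{p,k}(-1)^{\bullet}$ recorded in each case — or cancel, which accounts for the many vanishing entries. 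The prefactor $C_{p,k}$ itself is just the radical in \eqref{e:trip}, namely $\sqrt{(2p+1)(2k+1)^2/4\pi}$.

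The main obstacle is purely bookkeeping, but it is genuinely delicate in the cases where one or both of $m,n$ is negative, since then the coefficients in \eqref{e:RealHarmonicsa} carry factors of $i$. One must check that the imaginary contributions cancel so that $W^{p,k}_{q,m,n}$ comes out real — which it must, as the integrand is a product of real-valued functions — and track how a factor $i^2 = -1$ (two negative indices, Case 3) or a single surviving factor $i$ (the mixed-sign Cases 4--6) shifts the overall sign, producing the $(-1)^{q+1}$ and $(-1)^{m+1}$ prefactors. I would use reality of the answer as a running consistency check, and handle the degenerate boundaries separately — for instance $m=n$ (where the $\delta_{m,n}$ terms arise), $n=0$, or a value of $q$ satisfying two of the relations at once — to avoid double counting.
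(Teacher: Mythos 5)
Your proposal follows essentially the same route as the paper's proof in Appendix~\ref{s:Wigner}: expand each real harmonic via \eqref{e:RealHarmonicsa}, apply \eqref{e:trip} and the selection rules to the resulting (at most eight) complex triple products, and combine the survivors using \eqref{e:time-reversal} together with the reality of $W^{p,k}_{q,m,n}$. The only small imprecision is that a single surviving factor of $i$ (e.g.\ Case 4 with $q \geq 0$) forces the integral to vanish outright by the reality argument rather than shifting its sign --- the prefactors $(-1)^{q+1}$ and $(-1)^{m+1}$ all arise from $i^2 = -1$ when exactly two of the three indices are negative --- but your stated reality check already covers this.
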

A proof of Lemma~\ref{l:Wigner} is given in Appendix~\ref{s:Wigner}.

\subsection{An asymptotic expansion for Steklov eigenvalues} \label{s:AsymSteklov}

In \eqref{e:Lam1}, we have shown that the first-order perturbation of the $k^2, \ldots, (k+1)^2 -1$ Steklov eigenvalues are given by the $2k+1$ eigenvalues of the matrix $M^{(k)}$ given in \eqref{e:M}. The expression for $M^{(k)}$ involves 
the terms $W^{p,k}_{q,m,n}$ defined in \eqref{e:W} and computed in Lemma~\ref{l:Wigner}. 

All terms in $M^{(k)}$ involve
$\begin{pmatrix}
  p & k & k \\
  0 & 0 & 0
\end{pmatrix}$, 
which by the fourth selection rule is zero unless $p \in 2 \mathbb N$. 
Furthermore, by the third selection rule, 
we may assume that $ p \leq 2k$. Thus, we obtain 
\begin{equation} \label{e:M2}
M^{(k)}_{m,n}  
= - \frac{1}{2} \sum_{\substack{p=0 \\ p \textrm{ even}}}^{2k}\sum_{q = -p}^p A_{p,q}  \left( p(p+1) + 2k \right)  W^{p,k}_{q,m,n}. 
\end{equation}
We have show that $M^{(k)}$ is a symmetric matrix computed a finite sum.
By the spectral decomposition theorem for real symmetric matrices, there are $2k+1$ real eigenvalues in \eqref{e:Lam1} and the corresponding eigenvectors can be chosen to be orthogonal. 
Labelling each eigenvalue/eigenvector of $M^{(k)}$ with the subscript $n=-k,\ldots, k$,  we have 
\begin{equation} \label{e:Meigs}
M^{(k)} \alpha_n = \lambda_{k,n}^{(1)} \alpha_n, 
\qquad \qquad
n \leq |k|.
\end{equation}
If $\alpha_n$ is an eigenvector, the corresponding $O(\varepsilon^0)$ Steklov eigenfunction is given by 
\begin{equation} \label{e:StekEigFunPert}
u_{k,n}^{(0)} (r,\theta,\phi) = \sum_{m=-k}^k  (\alpha_n)_m r^{k} Y_{k,m}(\theta, \phi). 
\end{equation}
We summarize the analyticity result in \cite{Viator2019} and the preceding results in the following theorem.
\begin{thm} 
\label{t:StekAsym}
Fix $k \in \mathbb N$. 
The Steklov eigenvalues, $\lambda_n(\varepsilon)$, for $n \in \{ k^2, \ \ldots, \ (k+1)^2 - 1 \}$ consist of at most $2k+1$ branches of analytic functions which have at most algebraic singularities near $\varepsilon=0$.  At first-order in $\varepsilon$, the perturbation is given by the eigenvalues of the symmetric matrix $M^{(k)}$ in \eqref{e:M2}, as in \eqref{e:Meigs}. 
\end{thm}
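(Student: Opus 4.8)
The plan is to assemble the theorem from three ingredients: the analyticity result of prior work, a dimension-counting argument that pins down the indices $k^2, \ldots, (k+1)^2-1$, and the explicit first-order computation already carried out in \eqref{e:expansionS}--\eqref{e:Meigs}. Since most of the computational content precedes the statement, the proof is largely a matter of connecting the formal asymptotic expansion to the rigorous perturbation-theoretic framework.

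First I would recall from \cite{Viator2019} that, after pulling the Steklov problem back to the fixed boundary $S^2 = \partial \Omega_0$, the associated Dirichlet-to-Neumann operator $D(\varepsilon)$ forms an analytic family of operators in $\varepsilon$ near $0$. Invoking the Kato theory of analytic perturbation \cite{Kato_1966}, the spectrum of $D(\varepsilon)$ near the unperturbed eigenvalue $\lambda = k$ is described by finitely many branches that are analytic in $\varepsilon$ away from $0$ and admit at most algebraic (Puiseux-type) singularities at $\varepsilon = 0$; since the unperturbed eigenspace has dimension $2k+1$, there are at most $2k+1$ such branches. I would note that the general (algebraic-singularity) statement is the appropriate one here: although the boundary Dirichlet-to-Neumann map is self-adjoint on $L^2(\partial \Omega_\varepsilon)$, its pullback to the fixed sphere is self-adjoint only with respect to an $\varepsilon$-dependent inner product, so Rellich's stronger analyticity theorem does not apply directly. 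Next I would fix the indexing: for $\varepsilon = 0$ the eigenvalue $\ell$ occurs with multiplicity $2\ell+1$, so the number of eigenvalues strictly below level $k$ is $\sum_{\ell=0}^{k-1}(2\ell+1) = k^2$. Hence the $2k+1$ eigenvalues coalescing at $k$ occupy exactly the indices $n = k^2, \ldots, (k+1)^2-1$, and by continuity these remain the relevant indices for small $\varepsilon$.

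The crux is identifying the leading coefficients of these branches with the eigenvalues of $M^{(k)}$. The clean route is via the total (Riesz) projection $P(\varepsilon)$ onto the group of eigenvalues near $k$, which is itself analytic; the compression $P(\varepsilon) D(\varepsilon) P(\varepsilon)$ is then a finite-dimensional analytic family whose $2k+1$ eigenvalues are precisely the branches in question. Expanding this reduced family to first order in the orthonormal basis $\{r^k Y_{k,m}\}_{|m| \le k}$ of the unperturbed eigenspace reproduces exactly the matrix entries $M^{(k)}_{m,n}$: equating $O(\varepsilon)$ terms in the boundary condition \eqref{e:Steklovb} and projecting onto $Y_{k,n}$ yielded \eqref{e:Lam1}, namely $M^{(k)}\alpha = \lambda_k^{(1)} \alpha$, while \eqref{e:M}--\eqref{e:M2} together with Lemma~\ref{l:Wigner} show that $M^{(k)}$ is a finite, real, symmetric matrix. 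By the spectral theorem its $2k+1$ real eigenvalues are the first-order perturbations $\lambda_{k,n}^{(1)}$, as recorded in \eqref{e:Meigs}.

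The main obstacle I anticipate is precisely this last step: one must verify that the formal first-order coefficients extracted from the ansatz \eqref{e:expansionS} genuinely coincide with the leading coefficients of the analytic branches supplied by Kato's theorem, rather than merely solving a formal linear system. This is exactly the subtlety created by the degeneracy, which prevents the branches from being Fr\'echet differentiable at $\varepsilon = 0$, so the argument must be routed through the analytic reduced operator $P(\varepsilon) D(\varepsilon) P(\varepsilon)$ and its matrix representation, where the first-order term is unambiguously $M^{(k)}$, rather than through naive differentiation of the individual eigenvalue branches.
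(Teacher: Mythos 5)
Your proposal is correct and follows essentially the same route as the paper: the analyticity and branch structure are imported from \cite{Viator2019} via Kato's perturbation theory, the index range $k^2,\ldots,(k+1)^2-1$ comes from the multiplicity count $\sum_{\ell=0}^{k-1}(2\ell+1)=k^2$, and the first-order perturbation is identified with the eigenvalues of the real symmetric matrix $M^{(k)}$ obtained by equating $O(\varepsilon)$ terms in the boundary condition and projecting onto the $Y_{k,n}$. The one place you go beyond the paper is in explicitly routing the identification of the formal coefficients with the leading terms of the analytic branches through the reduced operator $P(\varepsilon)D(\varepsilon)P(\varepsilon)$; the paper simply summarizes the preceding formal computation together with the earlier analyticity result, leaving that justification implicit.
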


\begin{cor} \label{c:OneCoef}
Consider a domain $\Omega_\varepsilon$ of the form in \eqref{e:dom} with 
$A_{p',q'} = \delta_{p',p} \delta_{q',q}$.  
We make the following general observations. 
\begin{enumerate}
\item If $p$ is odd, no eigenvalue is perturbed at $O(\varepsilon)$. 
\item If $p> 2k$, no eigenvalue is perturbed at $O(\varepsilon)$.
\item If  $p=q=0$, then 
\begin{equation} \label{e:Perp-k0}
\lambda^{(1)}_{k,n} = - \frac{k}{\sqrt{4 \pi}} \qquad \forall k \in \N, \ |n| \leq k. 
\end{equation}
\end{enumerate}
\end{cor}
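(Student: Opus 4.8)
The plan is to read all three claims directly off the structure of the matrix $M^{(k)}$, since by Theorem~\ref{t:StekAsym} the first-order perturbations $\lambda^{(1)}_{k,n}$ are precisely its eigenvalues. Under the single-coefficient hypothesis $A_{p',q'} = \delta_{p',p}\delta_{q',q}$, the double sum in \eqref{e:M2} collapses to a single term, so that
$$
M^{(k)}_{m,n} = -\tfrac12\big(p(p+1) + 2k\big)\, W^{p,k}_{q,m,n},
$$
and each of the three assertions becomes a statement about the numbers $W^{p,k}_{q,m,n}$ furnished by Lemma~\ref{l:Wigner}. Parts (1) and (2) will assert that $M^{(k)}$ is the zero matrix, so that every one of its eigenvalues, and hence every first-order perturbation, vanishes; part (3) will identify $M^{(k)}$ explicitly.

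For (1) and (2) I would exploit the fact that the reduced form \eqref{e:M2} only retains indices with $p$ even and $p \le 2k$. If the single active coefficient has $p$ odd, it never enters \eqref{e:M2}, and likewise if $p > 2k$; in either case $M^{(k)} \equiv 0$. The underlying reason, which I would state explicitly, is that every one of the six cases of Lemma~\ref{l:Wigner} carries the common factor $\begin{pmatrix} p & k & k \\ 0 & 0 & 0 \end{pmatrix}$, and by the fourth selection rule this symbol vanishes unless $p + 2k$ is even (i.e. $p$ even), while by the third selection rule it vanishes unless $p \le 2k$. Thus in both excluded regimes $W^{p,k}_{q,m,n}\equiv 0$ for all $q,m,n$, forcing $M^{(k)}\equiv 0$ and giving (1) and (2).

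For (3) I would compute $M^{(k)}$ directly when $p = q = 0$. Here $p(p+1) + 2k = 2k$, and since $Y_{0,0} = (4\pi)^{-1/2}$ is constant, orthonormality of the real spherical harmonics yields $W^{0,k}_{0,m,n} = (4\pi)^{-1/2}\iint Y_{k,m}Y_{k,n}\,dS = (4\pi)^{-1/2}\delta_{m,n}$. Hence $M^{(k)} = -\tfrac{k}{\sqrt{4\pi}}\,\Id$, a scalar multiple of the identity, so all $2k+1$ of its eigenvalues equal $-k/\sqrt{4\pi}$, which is exactly \eqref{e:Perp-k0}. As a consistency check I would note that $\rho \equiv Y_{0,0}$ merely rescales the ball to radius $1 + \varepsilon/\sqrt{4\pi}$; since the Steklov eigenvalues of a ball of radius $R$ are $k/R$, expanding $k/(1 + \varepsilon/\sqrt{4\pi})$ to first order recovers the same value.

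I expect no serious obstacle: the corollary is essentially a specialization of Theorem~\ref{t:StekAsym} combined with the selection rules and orthonormality. The only points demanding a little care are (a) confirming that the Wigner factor $\begin{pmatrix} p & k & k \\ 0 & 0 & 0 \end{pmatrix}$ genuinely appears in all six cases of Lemma~\ref{l:Wigner}, so that its vanishing annihilates the entire matrix in parts (1) and (2), and (b) checking in part (3) that orthonormality makes $M^{(k)}$ exactly diagonal, and therefore a scalar matrix, so that the $2k+1$ eigenvalues coincide rather than merely sharing a common diagonal entry.
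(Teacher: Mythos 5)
Your proposal is correct, and for parts (1) and (2) it is the same argument as the paper's: those values of $p$ simply do not appear in the reduced sum \eqref{e:M2}, so $M^{(k)}$ vanishes identically (your explicit appeal to the common factor $\begin{pmatrix} p & k & k \\ 0 & 0 & 0 \end{pmatrix}$ and the third and fourth selection rules is exactly the justification the paper gives for why \eqref{e:M2} takes that reduced form in the first place). For part (3) you take a genuinely shorter route: the paper invokes Lemma~\ref{l:Wigner} to see that $M^{(k)}$ is diagonal and then evaluates the diagonal entries case by case ($m<0$, $m=0$, $m>0$) using the explicit identities $\begin{pmatrix} 0 & k & k \\ 0 & 0 & 0 \end{pmatrix} = (-1)^k (1+2k)^{-1/2}$ and $\begin{pmatrix} 0 & k & k \\ 0 & m & -m \end{pmatrix} = (-1)^{k-m} (1+2k)^{-1/2}$, whereas you observe that $Y_{0,0} = (4\pi)^{-1/2}$ is constant, so orthonormality gives $W^{0,k}_{0,m,n} = (4\pi)^{-1/2}\delta_{m,n}$ in one line and $M^{(k)} = -\tfrac{k}{\sqrt{4\pi}}\,\Id$ immediately. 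Your version avoids the Wigner machinery entirely for this case and makes it transparent that the matrix is scalar (not merely diagonal), which is the point needed to conclude that all $2k+1$ eigenvalues coincide; the paper's version has the virtue of exercising Lemma~\ref{l:Wigner} as a sanity check on its formulas. Your homothety consistency check is also sound and is essentially the content of Corollary~\ref{l:LocHom} in the paper.
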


\begin{proof}[Proof of Corollary~\ref{c:OneCoef}.]
(1) and (2) follows from the fact that $p$ odd and $p>2k$ does not make an appearance in \eqref{e:M2}. 
 
 For (3), we consider the case with $p=q=0$. 
 In this case, we have from \eqref{e:M2} that 
 $$
 M^{(k)}_{m,n} = - k W^{0,k}_{0,m,n}.
 $$
 From Lemma~\eqref{l:Wigner}, $M^{(k)}$ is a diagonal matrix. 
For $m<0$, we compute 
\begin{align*}
 M^{(k)}_{m,m} & = - k W^{0,k}_{0,m,m} \\
 &= - k C_{0,k} (-1)^m
 \begin{pmatrix}
  0 & k & k \\
  0 & 0 & 0
\end{pmatrix}
\begin{pmatrix}
  0 & k & k \\
  0 & m & -m
\end{pmatrix}. 
\end{align*}
Using the following identities\footnote{\url{https://dlmf.nist.gov/34.3}}
$$
\begin{pmatrix}
  0 & k & k \\
  0 & 0 & 0
\end{pmatrix} 
= (-1)^{k} \sqrt{ \frac{1}{1+ 2k}}
\quad \textrm{and} \quad
\begin{pmatrix}
  0 & k & k \\
  0 & m & -m
\end{pmatrix} 
= (-1)^{k-m} \sqrt{ \frac{1}{1+ 2k}}, 
$$
we obtain 
$$
M^{(k)}_{m,m} = - \frac{k}{\sqrt{4\pi}}. 
$$
A similar expression gives the same result for $m=0$ and $m>0$. 
The eigenvalues of this diagonal matrix are  $- \frac{k}{\sqrt{4\pi}}$ with mulitplicity $2k+1$, which gives the desired result.
\end{proof}

We interpret point (2) in Corollary~\ref{c:OneCoef} to mean that high frequency oscillations in the domain do not perturb low eigenvalues.

It is not difficult to show that the quantity $\Lambda(\Omega ) := \lambda(\Omega) \cdot |\Omega|^{\frac 1 3}$ is invariant to homothety, {\it i.e.}, 
$$
\Lambda(\alpha \Omega ) = \Lambda(\Omega ) \qquad \qquad \alpha > 0.
$$
Theorem~\ref{t:StekAsym} and Corollary~\ref{c:OneCoef} can be used to show the following local version of this statement. 
\begin{cor} \label{l:LocHom}
 $\Lambda(\Omega )$ is invariant to homothety for nearly circular $\Omega$.
\end{cor}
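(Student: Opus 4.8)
The plan is to exploit the fact that the constant spherical harmonic $Y_{0,0} = 1/\sqrt{4\pi}$ generates precisely the homothety direction among radial perturbations: replacing $\rho$ by $\rho + c\,Y_{0,0}$ for a constant $c$ changes the boundary radius from $1 + \varepsilon\rho$ to $1 + \varepsilon\rho + \varepsilon c/\sqrt{4\pi}$, which to first order in $\varepsilon$ is a uniform rescaling of $\Omega_\varepsilon$. Accordingly, \emph{local} invariance under homothety should mean precisely that the first-order coefficient $\Lambda^{(1)}_{k,n}$ in the expansion $\Lambda_{k,n}(\Omega_\varepsilon) = \Lambda_{k,n}(\Omega_0) + \varepsilon\,\Lambda^{(1)}_{k,n} + O(\varepsilon^2)$ of the volume-normalized eigenvalue $\Lambda_{k,n}(\Omega_\varepsilon) = \lambda_{k,n}(\varepsilon)\,|\Omega_\varepsilon|^{\frac13}$ does not depend on the coefficient $A_{0,0}$ of the perturbation function. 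I would adopt this independence as the precise content of the corollary and establish it by direct computation.

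First I would expand the product to first order. The volume computation carried out in Section~\ref{s:EigPert} gives $|\Omega_\varepsilon|^{\frac13} = (4\pi/3)^{\frac13}\bigl(1 + \varepsilon\,A_{0,0}/\sqrt{4\pi} + O(\varepsilon^2)\bigr)$, while Theorem~\ref{t:StekAsym} gives $\lambda_{k,n}(\varepsilon) = k + \varepsilon\,\lambda^{(1)}_{k,n} + O(\varepsilon^2)$ with $\lambda^{(1)}_{k,n}$ an eigenvalue of $M^{(k)}$. Multiplying these expansions, the coefficient of $\varepsilon$ in $\Lambda_{k,n}(\Omega_\varepsilon)$ is $(4\pi/3)^{\frac13}\bigl(\lambda^{(1)}_{k,n} + k\,A_{0,0}/\sqrt{4\pi}\bigr)$.

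Next I would isolate the $A_{0,0}$-dependence of $\lambda^{(1)}_{k,n}$. Inspecting \eqref{e:M2}, the $p=q=0$ summand contributes $-k\,A_{0,0}\,W^{0,k}_{0,m,n}$ to $M^{(k)}_{m,n}$, and by the computation in the proof of Corollary~\ref{c:OneCoef}(3) one has $W^{0,k}_{0,m,n} = \frac{1}{\sqrt{4\pi}}\delta_{m,n}$, so this contribution is the scalar matrix $-\frac{k\,A_{0,0}}{\sqrt{4\pi}}\,\Id$. A scalar shift of $M^{(k)}$ shifts every eigenvalue by the same amount, so $\lambda^{(1)}_{k,n} = \mu_{k,n} - \frac{k\,A_{0,0}}{\sqrt{4\pi}}$, where $\mu_{k,n}$ is the corresponding eigenvalue of the matrix obtained from $M^{(k)}$ by deleting the $p=q=0$ term and hence does not involve $A_{0,0}$. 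Substituting into the $\varepsilon$-coefficient, the two $A_{0,0}$ terms cancel exactly, leaving $\Lambda^{(1)}_{k,n} = (4\pi/3)^{\frac13}\,\mu_{k,n}$, which is manifestly independent of $A_{0,0}$. Since varying $A_{0,0}$ is exactly moving along the homothety direction $\rho \mapsto \rho + c\,Y_{0,0}$, this is the asserted local invariance.

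I expect the main obstacle to be conceptual rather than computational, namely correctly identifying the $Y_{0,0}$ perturbation with infinitesimal homothety and phrasing the conclusion as the vanishing of $\partial\Lambda^{(1)}_{k,n}/\partial A_{0,0}$; once this is set up, the argument reduces to the scalar-shift observation for $M^{(k)}$ together with the matching first-order volume correction. A minor point to verify is that the cancellation holds simultaneously for every eigenvalue branch, which is immediate because the $A_{0,0}$ contribution is a multiple of $\Id$ and therefore commutes with the spectral decomposition used in \eqref{e:Meigs}.
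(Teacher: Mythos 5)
Your proposal is correct, and it establishes a slightly more general statement than the paper's own proof. The paper specializes at once to the pure dilation perturbation $A_{\ell,m} = \delta_{\ell,0}\delta_{m,0}$, so that $\Omega_\varepsilon$ is exactly a family of balls, and then verifies $\frac{d}{d\varepsilon}\Lambda_{k,n}(0)=0$ by combining the volume expansion with Corollary~\ref{c:OneCoef}(3); in that special case $M^{(k)}$ \emph{is} the scalar matrix $-\tfrac{k}{\sqrt{4\pi}}\Id$ and there is nothing to diagonalize. You instead keep a general perturbation $\rho$ and observe that the $p=q=0$ summand of \eqref{e:M2} contributes the scalar matrix $-\tfrac{k A_{0,0}}{\sqrt{4\pi}}\Id$, which shifts every eigenvalue branch of $M^{(k)}$ by the same amount and cancels exactly against the first-order change of $|\Omega_\varepsilon|^{1/3}$; hence $\Lambda^{(1)}_{k,n}$ is independent of $A_{0,0}$ for \emph{every} perturbation, not merely along the pure homothety direction. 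What your version buys is the stronger conclusion that adding a homothety component to an arbitrary perturbation does not affect the first-order behaviour of $\Lambda$, which is arguably the more faithful reading of ``invariant to homothety for nearly circular $\Omega$''; the only extra care required is the point you already flag, namely that the $A_{0,0}$ contribution is a multiple of the identity and so the cancellation holds branch by branch. Both arguments ultimately rest on the same two computations: $W^{0,k}_{0,m,n}=\delta_{m,n}/\sqrt{4\pi}$ and $|\Omega_\varepsilon|^{1/3}=(4\pi/3)^{1/3}\bigl(1+\varepsilon A_{0,0}/\sqrt{4\pi}+O(\varepsilon^2)\bigr)$.
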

\begin{proof}
Fix $k \in \mathbb N$ and $|n| \leq k$. We consider a domain $\Omega_\varepsilon$ of the form in \eqref{e:dom} with $A_{\ell,m} = \delta_{\ell,0} \delta_{m,0}$.   We define
$$
\Lambda_{k,n}(\varepsilon) = \lambda_{k,n}(\varepsilon) |\Omega_\varepsilon|^{\frac 1 3}
$$
 and compute 
$$
 \frac{d}{d\varepsilon} \Lambda_{k,n}(0) 
  = \left( \frac{4 \pi }{3} \right)^{\frac 1 3} \lambda_{k,n}'(0) 
 + \left( \frac{1}{3} \right)^{\frac 1 3} \left( \frac{1}{4 \pi} \right)^{\frac 1 6}k. 
$$
Using \eqref{e:Perp-k0}, we find that $ \frac{d}{d\varepsilon} \Lambda_{k,n}(0)  = 0$, as desired.
\end{proof}

\section{Proof of Theorem \ref{t:stationary}} \label{s:stationary} 
We now prove Theorem \ref{t:stationary}.  Let $k^2 \in \mathbb{N}$ be fixed.  We will show that $\lambda_{k^2}^{(1)} \leq 0$. By Theorem \ref{t:StekAsym}, we know that $\lambda_{k^2}^{(1)}$ is the smallest eigenvalue of the matrix $M^{(k)}$.  We will show that $\text{tr}(M^{(k)}) = 0$.
Now
\begin{align*}
    \text{tr}(M^{(k)}) = \sum \limits_{m=-k}^k M^{(k)}_{m,m} = - \frac{1}{2} \sum_{\substack{p=0 \\ p \textrm{ even}}}^{2k}\sum_{q = -p}^p A_{p,q}  \left( p(p+1) + 2k \right) \sum \limits_{m=-k}^k  W^{p,k}_{q,m,m}. 
\end{align*}
We first note that, along the diagonal of the matrix $M^{(k)}$, we have $W^{p,k}_{q,m,m} = 0$ for $q<0$ by Lemma \ref{l:Wigner}.  Thus, if we call $\Theta_q^{p,k} = \sum \limits_{m=-k}^k  W^{p,k}_{q,m,m}$, the trace of $M^{(k)}$ reduces to
\begin{align*}
    \text{tr}(M^{(k)}) =  - \frac{1}{2} \sum_{\substack{p=0 \\ p \textrm{ even}}}^{2k}\sum_{q = 0}^p A_{p,q}  \left( p(p+1) + 2k \right) \Theta_q^{p,k}. 
\end{align*}
Set $q=0$.  Then, again using Lemma \ref{l:Wigner}, we compute
\begin{align*}
    \Theta_q^{p,k} = \sum \limits_{m=-k}^k  W^{p,k}_{q,m,m} =  C_{p,k} \begin{pmatrix}
p & k & k \\
0 & 0 & 0
\end{pmatrix} 
\sum \limits_{m=-k}^k (-1)^m 
\begin{pmatrix}
p & k & k \\
0 & m & -m
\end{pmatrix}
=0,
\end{align*}
where we have used a Wigner 3-$j$ symbol identity \cite{WolframWigner}. 

Now suppose $q>0$.  Using Lemma \ref{l:Wigner} one more time, we obtain
\begin{align*}
    \Theta_q^{p,k} & = \sum \limits_{m=-k}^k  W^{p,k}_{q,m,m} =  \sum \limits_{m=-k}^{-1}  W^{p,k}_{q,m,m} + \sum \limits_{m=1}^k  W^{p,k}_{q,m,m} \\
     & = \sum \limits_{m=1}^k \frac{1}{\sqrt{2}} C_{p,k} (-1)^{q}
\begin{pmatrix}
p & k & k \\
0 & 0 & 0
\end{pmatrix}
\begin{pmatrix}
p & k & k \\
-q & m & m
\end{pmatrix}
-\sum \limits_{m=-k}^{-1} \frac{1}{\sqrt{2}}C_{p,k} (-1)^{q}
\begin{pmatrix}
p & k & k \\
0 & 0 & 0
\end{pmatrix}
\begin{pmatrix}
p & k & k \\
q & m & m
\end{pmatrix} \\
& = \sum \limits_{m=1}^k \frac{1}{\sqrt{2}} C_{p,k} (-1)^{q}
\begin{pmatrix}
p & k & k \\
0 & 0 & 0
\end{pmatrix}
\begin{pmatrix}
p & k & k \\
-q & m & m
\end{pmatrix}
- \frac{1}{\sqrt{2}}C_{p,k} (-1)^{q}
\begin{pmatrix}
p & k & k \\
0 & 0 & 0
\end{pmatrix}
\begin{pmatrix}
p & k & k \\
-q & m & m
\end{pmatrix} = 0
\end{align*}
by \eqref{e:time-reversal}.  We conclude that $\text{tr}(M^{(k)}) = 0$, and thus $\lambda_{k^2}^{(1)} \leq 0$ as desired.  This completes the proof.

\clearpage
\printbibliography

\clearpage
\appendix
\section{Proof of Lemma~\ref{l:Wigner}} \label{s:Wigner}
We use the expression for real spherical harmonics in terms of complex spherical harmonics in \eqref{e:RealHarmonicsa} and use \eqref{e:trip} to evaluate the six cases for the combinations of ($n < 0$, $n=0$, and $n>0$) and ($m< 0$, $m=0$, and $m>0$) assuming $m \geq n$ in turn. 

\medskip
\noindent \underline{Case 1: $m > 0$ and $n >0$.}
For $q < 0$, we compute 
\begin{align*}
W^{p,k}_{q,m,n} = \frac{i}{2 \sqrt{2}} \iint_{S^2} (Y_p^q - (-1)^q Y_p^{-q} ) (Y_k^{-m} + (-1)^m Y_k^m ) (Y_k^{-n} + (-1)^n Y_k^n ) \ dS
= 0
\end{align*}
since the integral of a triple product of complex spherical harmonics is a real number and so is $W^{p,k}_{q,m,n}$.

For $q=0$, we have
\begin{align*}
W^{p,k}_{q,m,n} & = \frac{1}{2} \iint_{S^2} Y_p^0 (Y_k^{-m} + (-1)^m Y_k^m ) (Y_k^{-n} + (-1)^n Y_k^n ) \ dS \\
& = \frac{1}{2} \iint_{S^2} Y_p^0 [Y_k^{-m}Y_k^{-n} + (-1)^m Y_k^mY_k^{-n} +(-1)^nY_k^{-m}Y_k^n + (-1)^{n+m}Y_k^mY_k^n] \ dS
\end{align*}
Applying \eqref{e:trip} and the selection criteria for Wigner 3-$j$ symbols, we obtain
\begin{align*}
  W^{p,k}_{q,m,n}=  \delta_{m,n} C_{p,k} (-1)^m 
\begin{pmatrix}
p & k & k \\
0 & 0 & 0
\end{pmatrix}
\begin{pmatrix}
p & k & k \\
0 & m & -m
\end{pmatrix}
\end{align*}

For $q > 0$, we compute
{\tiny
\begin{align*}
W^{p,k}_{q,m,n} & = \frac{1}{2 \sqrt{2}} \iint_{S^2} (Y_p^{-q} + (-1)^q Y_p^{q} ) (Y_k^{-m} + (-1)^m Y_k^m ) (Y_k^{-n} + (-1)^n Y_k^n ) \ dS \\
& = \frac{1}{2 \sqrt{2}} \iint_{S^2} 
Y_p^{-q} Y_k^{-m} Y_k^{-n} + (-1)^n Y_p^{-q} Y_k^{-m} Y_k^{n} 
+ (-1)^m Y_p^{-q} Y_k^{m} Y_k^{-n} + (-1)^{m+n} Y_p^{-q} Y_k^{m} Y_k^{n} \\
& \qquad \qquad \quad  +  (-1)^q Y_p^{q} Y_k^{-m} Y_k^{-n}  + (-1)^{q+n} Y_p^{q} Y_k^{-m} Y_k^{n}  
+ (-1)^{q+m} Y_p^{q} Y_k^{m} Y_k^{-n} + (-1)^{q + m+n} Y_p^{q} Y_k^{m} Y_k^{n} \ dS \\
& = \frac{1}{2 \sqrt{2}} C_{p,k}
\begin{pmatrix}
p & k & k \\
0 & 0 & 0
\end{pmatrix}
\Big[ 
\begin{pmatrix}
p & k & k \\
-q & -m & -n
\end{pmatrix}
+ (-1)^n 
\begin{pmatrix}
p & k & k \\
-q & -m & n
\end{pmatrix}
+ (-1)^m 
\begin{pmatrix}
p & k & k \\
-q & m & -n
\end{pmatrix}
+ (-1)^{m+n} 
\begin{pmatrix}
p & k & k \\
-q & m & n
\end{pmatrix}  \\
& \qquad \qquad \qquad \quad  
+ (-1)^q 
\begin{pmatrix}
p & k & k \\
q & -m & -n
\end{pmatrix}
+ (-1)^{q+n} 
\begin{pmatrix}
p & k & k \\
q & -m & n
\end{pmatrix}
+ (-1)^{q+m} 
\begin{pmatrix}
p & k & k \\
q & m & -n
\end{pmatrix}
+ (-1)^{q + m+n} 
\begin{pmatrix}
p & k & k \\
q & m & n
\end{pmatrix}
\Big]. 
\end{align*}}
Since $q,m,n>0$ the first and last Wigner 3-j symbols are zero by the second selection rule. 
Since $m\geq n$, by the second selection rule, we also have that the second and second to last terms vanish. 
Furthermore, using \eqref{e:time-reversal}, each of the remaining sums combine to give
\begin{align*}
W^{p,k}_{q,m,n} & = \frac{1}{ \sqrt{2}} C_{p,k}
\begin{pmatrix}
p & k & k \\
0 & 0 & 0
\end{pmatrix}
\left[ (-1)^m 
\begin{pmatrix}
p & k & k \\
q & -m & n
\end{pmatrix}
+ (-1)^q 
\begin{pmatrix}
p & k & k \\
-q & m & n
\end{pmatrix}
\right] \\
&= \left\{ \begin{array}{ll}
\frac{1}{\sqrt{2}} C_{p,k} (-1)^{q}
\begin{pmatrix}
p & k & k \\
0 & 0 & 0
\end{pmatrix}
\begin{pmatrix}
p & k & k \\
-q & m & n
\end{pmatrix}
& \textrm{if } q = m + n \\
\frac{1}{\sqrt{2}} C_{p,k} (-1)^{m}
\begin{pmatrix}
p & k & k \\
0 & 0 & 0
\end{pmatrix}
\begin{pmatrix}
p & k & k \\
q & -m & n
\end{pmatrix}
 & \textrm{if } q = m - n \\
 0 & \textrm{otherwise}
\end{array} \right. ,
\end{align*}
as desired.

\bigskip
\noindent \underline{Case 2: $m =n= 0$.}
For $q<0$, we have
\begin{align*}
    W^{p,k}_{q,m,n} & = \frac{i}{\sqrt{2}} \iint_{S^2} (Y_p^q - (-1)^q Y_p^{-q} ) (Y_k^{0})^2 \ dS
= 0
\end{align*}
as in Case 1.

For $q=0$, we have
\begin{align*}
    W^{p,k}_{q,m,n} & =  \iint_{S^2} Y_p^0 (Y_k^{0})^2 \ dS 
= C_{p,k} 
\begin{pmatrix}
p & k & k \\
0 & 0 & 0
\end{pmatrix}^2
\end{align*}
by \eqref{e:trip}.

Finally, for $q>0$, we obtain
\begin{align*}
W^{p,k}_{q,m,n} & = \frac{1}{\sqrt{2}} \iint_{S^2} (Y_p^{-q} + (-1)^q Y_p^{q} ) (Y_k^{0})^2 \ dS \\
&= \frac{1}{\sqrt{2}} C_{p,k} 
\begin{pmatrix}
p & k & k \\
0 & 0 & 0
\end{pmatrix}
\left[ 
\begin{pmatrix}
p & k & k \\
-q & 0 & 0
\end{pmatrix}
+ (-1)^q
\begin{pmatrix}
p & k & k \\
q & 0 & 0
\end{pmatrix}
\right]  \\
&= 0
\end{align*}
by the second selection rule.

\bigskip
\noindent \underline{Case 3: $m < 0$ and $n < 0$.}
If $q<0$, then we calculate
\begin{align*}
W^{p,k}_{q,m,n} = \frac{-i}{2 \sqrt{2}} \iint_{S^2} (Y_p^q - (-1)^q Y_p^{-q} ) (Y_k^{m} - (-1)^m Y_k^{-m} ) (Y_k^{n} - (-1)^n Y_k^{-n} ) \ dS
= 0
\end{align*}
as in Case 1.

If $q=0$, then we have
\begin{align*}
W^{p,k}_{q,m,n} = -\frac{1}{2} \iint_{S^2} Y_p^0 [Y_k^{m}Y_k^{n} - (-1)^m Y_k^{-m}Y_k^{n} -(-1)^nY_k^{m}Y_k^{-n} + (-1)^{n+m}Y_k^{-m}Y_k^{-n}] \ dS
\end{align*}
Applying \eqref{e:trip} and the selection criteria for Wigner 3-$j$ symbols, we obtain
\begin{align*}
W^{p,k}_{q,m,n} = \delta_{m,n} C_{p,k} (-1)^m 
\begin{pmatrix}
p & k & k \\
0 & 0 & 0
\end{pmatrix}
\begin{pmatrix}
p & k & k \\
0 & m & -m
\end{pmatrix}
\end{align*}
as desired.

For $q>0$, we compute
\begin{align*}
W^{p,k}_{q,m,n} & = -\frac{1}{2\sqrt{2}} \iint_{S^2} [ Y_p^{-q}Y_k^{m}Y_k^{n} -(-1)^nY_p^{-q}Y_k^{m}Y_k^{-n} - (-1)^mY_p^{-q}Y_k^{-m}Y_k^{n} + (-1)^{n+m}Y_p^{-q}Y_k^{-m}Y_k^{-n} \\
& + (-1)^qY_p^{q}Y_k^{m}Y_k^{n} - (-1)^{n+q}Y_p^qY_k^{m}Y_k^{-n}-(-1)^{m+q}Y_p^qY_k^{-m}Y_k^{n} + (-1)^{n+m+q}Y_p^qY_k^{-m}Y_k^{-n}] \ dS
\end{align*}
Since $m,n<0$ and $q>0$, the first and last terms vanish.  If $q \neq m-n$ and  $q\neq -n-m$, then by the selection criteria all of the above terms are zero.  If $q=m-n$, then only the second and second-to-last terms are non-zero, and we obtain
\begin{align*}
W^{p,k}_{q,m,n} & = \frac{1}{\sqrt{2}} C_{p,k} (-1)^{n}
\begin{pmatrix}
p & k & k \\
0 & 0 & 0
\end{pmatrix}
\begin{pmatrix}
p & k & k \\
q & -m & n
\end{pmatrix}
\end{align*}

Finally, if $q=-n-m$, then only the fourth and fifth terms are non-zero, and we have
\begin{align*}
W^{p,k}_{q,m,n} & = \frac{1}{\sqrt{2}} C_{p,k} (-1)^{q+1}
\begin{pmatrix}
p & k & k \\
0 & 0 & 0
\end{pmatrix}
\begin{pmatrix}
p & k & k \\
q & m & n
\end{pmatrix}
\end{align*}
as desired.

\bigskip
\noindent \underline{Case 4: $m > 0$ and $n < 0$.}

For $q<0$, we have,
\begin{align*}
W^{p,k}_{q,m,n} & = -\frac{1}{2\sqrt{2}} \iint_{S^2} [ Y_p^{q}Y_k^{-m}Y_k^{n} -(-1)^nY_p^{q}Y_k^{-m}Y_k^{-n} + (-1)^mY_p^{q}Y_k^{m}Y_k^{n} - (-1)^{n+m}Y_p^{q}Y_k^{m}Y_k^{-n} \\
& - (-1)^qY_p^{-q}Y_k^{-m}Y_k^{n} + (-1)^{n+q}Y_p^{-q}Y_k^{-m}Y_k^{-n}-(-1)^{m+q}Y_p^{-q}Y_k^{m}Y_k^{n} + (-1)^{n+m+q}Y_p^{-q}Y_k^{m}Y_k^{-n}] \ dS
\end{align*}
Since $m>0$ and $q,n<0$, the first and last terms vanish.  If $q \notin \{m+n, n-m, -n-m \}$, then the rest of the terms vanish as well by the selection criteria.  If $q = n+m$, then only the second and second-to-last terms are nonzero, and we obtain
\begin{align*}
W^{p,k}_{q,m,n} & = \frac{1}{\sqrt{2}} C_{p,k} (-1)^{n}
\begin{pmatrix}
p & k & k \\
0 & 0 & 0
\end{pmatrix}
\begin{pmatrix}
p & k & k \\
q & -m & -n
\end{pmatrix}
\end{align*}
If $q=n-m$, then only the fourth and fifth terms are nonzero, and we obtain
\begin{align*}
W^{p,k}_{q,m,n} & = \frac{1}{\sqrt{2}} C_{p,k} (-1)^{q}
\begin{pmatrix}
p & k & k \\
0 & 0 & 0
\end{pmatrix}
\begin{pmatrix}
p & k & k \\
q & m & -n
\end{pmatrix}
\end{align*}
Finally, if $q=-n-m$, the only the third and sixth terms are nonzero, and we obtain
\begin{align*}
W^{p,k}_{q,m,n} & = \frac{1}{\sqrt{2}} C_{p,k} (-1)^{m+1}
\begin{pmatrix}
p & k & k \\
0 & 0 & 0
\end{pmatrix}
\begin{pmatrix}
p & k & k \\
q & m & n
\end{pmatrix}
\end{align*}

For $q=0$, we have
\begin{align*}
W^{p,k}_{q,m,n} = \frac{i}{2} \iint_{S^2} Y_p^0 (Y_k^{-m} + (-1)^m Y_k^m ) (Y_k^{n} - (-1)^n Y_k^{-n} ) \ dS
= 0
\end{align*}
as in Case 1. 

For $q>0$, we have 
\begin{align*}
W^{p,k}_{q,m,n} = \frac{i}{2\sqrt{2}} \iint_{S^2} (Y_p^{-q} + (-1)^q Y_p^q ) (Y_k^{-m} + (-1)^m Y_k^m ) (Y_k^{n} - (-1)^n Y_k^{-n} ) \ dS
= 0
\end{align*}
as in Case 1.

\bigskip
\noindent \underline{Case 5: $m > 0$ and $n = 0$.}

For $q<0$, we have
\begin{align*}
W^{p,k}_{q,m,n} = \frac{i}{2} \iint_{S^2} (Y_p^{q} - (-1)^q Y_p^{-q} ) (Y_k^{-m} + (-1)^m Y_k^m )  Y_k^0 \ dS
= 0
\end{align*}
as in Case 1. 

For $q=0$, we have
\begin{align*}
W^{p,k}_{q,m,n} & = \frac{1}{\sqrt{2}} \iint_{S^2} Y_p^0 (Y_k^{-m} + (-1)^m Y_k^{m}) Y_k^{0} \ dS \\
&= \frac{1}{\sqrt{2}} C_{p,k} 
\begin{pmatrix}
p & k & k \\
0 & 0 & 0
\end{pmatrix}
\left[ 
\begin{pmatrix}
p & k & k \\
0 & -m & 0
\end{pmatrix}
+ (-1)^m
\begin{pmatrix}
p & k & k \\
0 & m & 0
\end{pmatrix}
\right]  \\
&= 0
\end{align*}
by the second selection rule.

For $q>0$, we have
{\tiny
\begin{align*}
W^{p,k}_{q,m,n} 
&= \frac{1}{2} \iint_{S^2} (Y_p^{-q} + (-1)^q Y_p^{q}) (Y_k^{-m} + (-1)^m Y_k^{m}) Y_k^0 \ dS \\
&= \frac{1}{2} \iint_{S^2} (Y_p^{-q}Y_k^{-m} Y_k^0 + (-1)^m Y_p^{-q} Y_k^{m} Y_k^0
+ (-1)^q Y_p^{q} Y_k^{-m} Y_k^0 + (-1)^{m+q} Y_p^{q} Y_k^{m} Y_k^0 \ dS \\
&= \frac{1}{2} C_{p,k}
\begin{pmatrix}
p & k & k \\
0 & 0 & 0
\end{pmatrix}
\left[
\begin{pmatrix}
p & k & k \\
-q & -m & 0
\end{pmatrix}
+ (-1)^m 
\begin{pmatrix}
p & k & k \\
-q & m & 0
\end{pmatrix}
+ (-1)^q 
\begin{pmatrix}
p & k & k \\
q & -m & 0
\end{pmatrix}
+ (-1)^{m+q} 
\begin{pmatrix}
p & k & k \\
q & m & 0
\end{pmatrix}
\right].
\end{align*}}
Using the selection rules, we obtain 
\begin{align*}
W^{p,k}_{q,m,n} =
\delta_{q,m} C_{p,k} (-1)^q 
\begin{pmatrix}
p & k & k \\
0 & 0 & 0
\end{pmatrix}
\begin{pmatrix}
p & k & k \\
m & -m & 0
\end{pmatrix},  
\end{align*}
as desired.

\bigskip
\noindent \underline{Case 6: $m = 0$ and $n < 0$.}
If $q<0$, then we obtain
\begin{align*}
W^{p,k}_{q,m,n} = -\frac{1}{2} \iint_{S^2} Y_k^0[Y_p^{q}Y_k^n - (-1)^nY_p^{q}Y_k^{-n} - (-1)^qY_p^{-q}Y_k^{n}+(-1)^{q+n}Y_p^{-q}Y_k^{-n}] \ dS
\end{align*}
The first and last terms vanish since $q,n<0$.  If $q \neq n$, then the middle two terms vanish as well, and so we obtain
\begin{align*}
   W^{p,k}_{q,m,n} = \delta_{q,n} C_{p,k} (-1)^q 
\begin{pmatrix}
p & k & k \\
0 & 0 & 0
\end{pmatrix}
\begin{pmatrix}
p & k & k \\
n & -n & 0
\end{pmatrix} 
\end{align*}
If $q=0$, then
\begin{align*}
W^{p,k}_{q,m,n} = -\frac{i}{\sqrt{2}} \iint_{S^2} Y_k^0Y_p^{0}[Y_k^n - (-1)^nY_k^{-n}] \ dS = 0
\end{align*}
as in Case 1.  Finally, if $q>0$, then we have
\begin{align*}
W^{p,k}_{q,m,n} = -\frac{i}{2} \iint_{S^2} Y_k^0[Y_p^{-q}Y_k^n - (-1)^nY_p^{-q}Y_k^{-n} + (-1)^qY_p^{q}Y_k^{n}-(-1)^{q+n}Y_p^{q}Y_k^{-n}] \ dS = 0
\end{align*}
as in Case 1.
\end{document}